\documentclass[preprint,12pt]{article}
\usepackage{amsthm,amsmath,amssymb,latexsym}
\usepackage{mathtools}
\usepackage{enumerate}
\usepackage{framed}
\usepackage{tikz}
\usepackage{tikz-cd}
\usepackage{tabularx}
\usepackage{array}
\usepackage{todonotes}
\usepackage{pdflscape}
\usepackage{setspace}
\usepackage{tablefootnote}
\usepackage{anysize}
\usepackage{versions}
\usepackage{hyperref}
\usepackage{cancel}
\usepackage{yhmath}
\usepackage{physics}
\usepackage{mathdots}
\usepackage{color}
\usepackage{siunitx}
\usepackage{multirow}
\usepackage{gensymb}
\usepackage{booktabs}
\usepackage{float}
\usepackage{lineno}

\marginsize{1in}{1in}{1in}{1in}

\newcolumntype{M}[1]{>{\centering\arraybackslash}m{#1}}

\usetikzlibrary{babel}

\renewcommand{\le}{\leqslant}
\renewcommand{\ge}{\geqslant}

\usepackage{versions}
\usepackage{hyperref}
\hypersetup{
  colorlinks   = true, 
  urlcolor     = blue, 
  linkcolor    = blue, 
  citecolor   = red 
}

\usetikzlibrary{fadings}
\usetikzlibrary{patterns}
\usetikzlibrary{shadows.blur}
\usetikzlibrary{shapes}
\usetikzlibrary{positioning}
\usetikzlibrary{3d, shapes.geometric, calc}

\newtheorem{theorem}{Theorem}[section]
\newtheorem{proposition}[theorem]{Proposition}
\newtheorem{lemma}[theorem]{Lemma}

\newtheorem{corollary}[theorem]{Corollary}
\newtheorem{question}[theorem]{Question}
\newtheorem*{question*}{Question}
\newtheorem{example}[theorem]{Example}

\theoremstyle{definition}
\newtheorem{definition}[theorem]{Definition}
\theoremstyle{remark}
\newtheorem{remark}[theorem]{Remark}

\numberwithin{equation}{section}

\DeclareMathOperator{\ind}{ind}

\DeclareMathOperator{\xind}{x-ind}

\DeclareMathOperator{\sind}{sim-ind}
\DeclareMathOperator{\sd}{sd}

\renewcommand{\epsilon}{\varepsilon}
\renewcommand{\phi}{\varphi}
\renewcommand{\kappa}{\varkappa}
\renewcommand{\theta}{\vartheta}

\begin{document}

\title{The mapping index through the lens of the cross-index}

\author{Vuong Bui\thanks{
Swinburne Vietnam, FPT University, Hanoi, 80 Duy Tan Street, Hanoi 100000, Vietnam (\texttt{bui.vuong@yandex.ru})
},\; Hamid Reza Daneshpajouh\thanks{
School of Mathematical Sciences, University of Nottingham Ningbo China, 199 Taikang East Road, Ningbo, 315100, China (\texttt{Hamid-Reza.Daneshpajouh@nottingham.edu.cn})
},\; Roman Karasev\thanks{
Institute for Information Transmission Problems RAS, Bolshoy Karetny per. 19, Moscow, Russia 127994 and Moscow Institute of Physics and Technology, Institutskiy per. 9, Dolgoprudny, Russia 141700 (\texttt{r\_n\_karasev@mail.ru}) \\
The research of R. Karasev was carried out within the state assignment 1.1.1-0029/25 of Ministry of Science and Higher Education of the Russian Federation for IITP RAS
}
}
\date{}
\maketitle

\begin{abstract}
We study the cross-index of free \(G\)-posets as a combinatorial analogue of the equivariant topological index. We demonstrate that the cross-index exhibits many structural properties closely paralleling those of the topological index, while its behavior with respect to unions displays a pronounced dichotomy depending on the acting group. Specifically, if \(P = A \cup B\) is a union of \(G\)-invariant subposets, then for \(G = \mathbb{Z}_2\) we obtain the sharp inequality \(
\xind P \le \xind A + \xind B + 1, \)
which is directly analogous to the classical union inequality for the topological index. In contrast, for every group \(G\neq \mathbb{Z}_2\), this phenomenon fails in general, and we establish the best possible weaker estimate
\(
\xind P \le \xind A + 2(\xind B+1).
\)
This reveals a fundamental distinction between the \(\mathbb{Z}_2\)-equivariant and non-\(\mathbb{Z}_2\)-equivariant settings at the purely combinatorial level. As further consequences, we compare the cross-index with both the topological index and the simplicial index, showing in particular that the gap between the cross-index and the topological index can be arbitrarily large. These results clarify the role of the cross‑index as a combinatorial analogue of the equivariant topological index and further strengthen the interplay between equivariant topological methods and combinatorial structures endowed with symmetry.
\end{abstract}

\section{Introduction}

Equivariant topology is a fascinating and dynamic branch of mathematics that focuses on the study of topological spaces which possess certain symmetries. Its popularity arises from its broad applications across diverse mathematical disciplines, such as discrete geometry~\cite{vzivaljevic2017topological,blagojevic2017beyond}, graph~\cite{lovasz1978kneser,daneshpajouh2018new, chen2015multichromatic, daneshpajouh2021neighborhood}, and hypergraph coloring problems~\cite{alon1986chromatic, alishahi2015chromatic,daneshpajouh2023hedetniemi, frick2020chromatic}, as well as fair division problems~\cite{de2013course,avvakumov2021envy}. For a gentle introduction to equivariant methods in combinatorics, we refer the reader to the excellent book by Ji\v{r}\'{i} Matou\v{s}ek~\cite{matouvsek2003using}. Typically, these applications involve reducing original problems to the nonexistence of an equivariant map between two specific equivariant topological spaces. In order to be able to show that there is no equivariant map between some equivariant topological spaces, some ``topological measures'' are defined. Most of such measures are functions from the set of a class of equivariant topological spaces to non-negative integers with the ``monotonicity property'': The existence of an equivariant map between two spaces guarantees that the invariant value of the domain never exceeds that of the codomain. This fundamental property yields an effective obstruction principle: whenever the invariant value of a space strictly exceeds that of another space, one can immediately conclude that no equivariant map exists from the former to the latter. Among these measures, the topological index, denoted by $\ind X$ for a $G$-space $X$, is one of the most powerful. It is defined as the minimum value of $n$ for which a $G$-equivariant map (often referred to simply as a $G$-map) exists from space $X$ to $E_nG$. Here, $E_nG$ represents the standard $(n+1)$-fold (topological) join $\underbrace{G*G*\dots *G}_{n+1}$ with the diagonal action, where $G$ is a topological group (throughout this paper, when \(G\) is a finite group, it is equipped with the discrete topology).  Note that when $G=\mathbb{Z}_2$, $E_n\mathbb{Z}_2$ is ($\mathbb Z_2$-homeomorphic to) the $n$-dimensional sphere $\mathbb{S}^n$ with the antipodal action. In this paper, we aim to study the topological index through the lens of a combinatorial index known as the cross-index, which was originally introduced in~\cite{Sim}. This combinatorial index has proven to be both powerful and versatile, with applications spanning various combinatorial structures, including graph coloring problems~\cite{Sim,daneshpajouh2021colorings, daneshpajouh2025box}, hypergraph coloring~\cite{Ali}, and topological problems such as the generalized Topological Hedetniemi's Conjecture~\cite{bui2023topological}. Furthermore, the cross-index is clearly computable, whereas computability of the index itself remains unknown. To introduce our key findings, it is prudent to first revisit some foundational concepts associated with the generalized cross-index for arbitrary groups, as introduced in~\cite{bui2023topological}. Throughout this paper, $G$ refers to a non-trivial group (i.e., $|G| \ge 2$), and its identity element is denoted by $e$. A $G$-poset is defined as a partially ordered set $(P, \preceq)$ that is equipped with a group action by $G$ that preserves the order structure; specifically, if $p_1 \preceq p_2$, then $g p_1 \preceq g p_2$ for every $g \in G$. The group action is said to be free if the identity element is the only group member that fixes any point in $P$. A $G$-equivariant order-preserving map (or simply a $G$-map) between two $G$-posets $P$ and $Q$ is a function $\psi : P \to Q$ that is simultaneously order-preserving ($p_1 \preceq p_2 \Rightarrow \psi(p_1) \preceq \psi(p_2)$) and preserves the $G$-action ($\psi(g p) = g \psi(p)$ for all $g \in G$). The face poset $\mathcal{F}(\mathcal{K})$ of the simplicial complex $\mathcal{K}$ is the poset whose vertices are all non-empty simplices of $\mathcal{K}$ ordered with the inclusion. If $\mathcal{K}$ is $G$-simplicial complex, then we consider $\mathcal{F}(\mathcal{K})$  with the action naturally induced from $\mathcal{K}$. Finally, for any non-negative integer $n$, we define $Q_n G$ as the $G$-poset with underlying set $G \times\{0, \dots, n\}$, where the $G$-action is given by left multiplication on the first component ($g \cdot (h, i) = (g h, i)$) and the order relation satisfies $(g, i) \prec (h, j)$ precisely when $i < j$. Here, for each pair $(g,i) \in Q_n G$, we refer to $g$ as the \emph{sign} and $i$ as the \emph{value}. Now we are ready to recall the definition of cross-index for an arbitrary $G$-poset.

\begin{definition}[\cite{bui2023topological}]
The cross-index of a $G$-poset $P$, denoted $\xind P$, is the minimal integer $n$ for which a $G$-map $\psi : P \to Q_n G$ exists.
\end{definition}

We now return to the main thread of our discussion. It is well established that the topological index satisfies several fundamental structural properties. Motivated by these, we demonstrate that analogous results also hold for the cross-index. Furthermore, we show that the topological properties in question can be derived purely from combinatorial conditions, and we provide a simpler, entirely combinatorial proofs of these facts. There is, however, one notable exception to this correspondence, namely the ``union property''. Indeed, it is a classical fact that if \(X\) is a free \(G\)-simplicial complex and \(\mathcal{K}\) and \(\mathcal{L}\) are \(G\)-equivariant subcomplexes such that \(X = \mathcal{K} \cup \mathcal{L}\), then the following inequality holds:
\[
\ind X \le \ind \mathcal{K} + \ind \mathcal{L} + 1.
\]
This naturally leads to the question whether an analogous inequality holds for the cross-index.

\begin{question}
\label{que: que1}
Does the inequality \( \xind P \le \xind A + \xind B + 1 \) hold whenever \(P = A \cup B\), where \(A\) and \(B\) are \(G\)-invariant subposets of \(P\)?
\end{question}

We first observe that, for the important class of \(G\)-posets arising as face posets of \(G\)-simplicial complexes, the classical union inequality indeed remains valid. This shows that Question~\ref{que: que1} is natural and that the classical
bound remains valid for an important class of \(G\)-posets arising from
simplicial complexes.

\begin{theorem}\label{Thm: union1}
Let \(X = \mathcal{K} \cup \mathcal{L}\) be a free \(G\)-simplicial complex, where \(\mathcal{K}\) and \(\mathcal{L}\) are free \(G\)-invariant subcomplexes. Then
\[
\xind \mathcal{F}(X) \le \xind \mathcal{F}(\mathcal{K}) + \xind \mathcal{F}(\mathcal{L}) + 1.
\]
\end{theorem}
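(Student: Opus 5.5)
Let $m=\xind \mathcal{F}(\mathcal{K})$ and $n=\xind \mathcal{F}(\mathcal{L})$. Fix $G$-maps $\alpha:\mathcal{F}(\mathcal{K})\to Q_mG$ and $\beta:\mathcal{F}(\mathcal{L})\to Q_nG$. The plan is to glue these into a single $G$-map $\psi:\mathcal{F}(X)\to Q_{m+n+1}G$. The model is the topological join argument: values $0,\dots,m$ come from $\mathcal{K}$, and values $m+1,\dots,m+n+1$ come from $\mathcal{L}$ shifted by $m+1$. The extra feature available for face posets is that every simplex $\sigma\in X$ splits canonically into a $\mathcal{K}$-part and an $\mathcal{L}$-part.

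For $\sigma\in X$, put $\sigma_K$ equal to the largest face of $\sigma$ lying in $\mathcal{K}$. To make this well defined, I would first pass to the barycentric subdivision. There, a simplex of $\operatorname{sd}X$ is a chain of simplices of $X$, and $\sigma_K$ is the subchain consisting of members lying in $\mathcal{K}$, which is again a simplex of $\operatorname{sd}\mathcal{K}$. The same works for $\mathcal{L}$. I would therefore first record, or prove directly, that $\xind\mathcal{F}(Y)=\xind\mathcal{F}(\operatorname{sd}Y)$, or at least the inequalities needed:
\begin{itemize}
\item[(i)] $\xind\mathcal{F}(\operatorname{sd}Y)\le \xind\mathcal{F}(Y)$, via the map sending a chain to its top element and then composing;
\item[(ii)] $\xind\mathcal{F}(X)\le\xind\mathcal{F}(\operatorname{sd}X)$, via the map $\sigma\mapsto$ a maximal chain of faces of $\sigma$, chosen equivariantly.
\end{itemize}
Step (ii) is delicate, so alternatively I would work with vertex sets directly. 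Since $\mathcal{K}$ and $\mathcal{L}$ are full subcomplexes of the barycentric subdivision, $\sigma_K=\sigma\cap V(\operatorname{sd}\mathcal{K})$ is a face of $\sigma$ whenever it is nonempty. This is why I would pass to $\operatorname{sd}$.

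Assuming the reduction, define
\[
\psi(\sigma)=\begin{cases}(g,\,m+1+j) & \text{if } \sigma_L\neq\emptyset,\ \beta(\sigma_L)=(g,j),\\ \alpha(\sigma) & \text{if } \sigma_L=\emptyset \ (\text{so } \sigma\in\mathcal{K}).\end{cases}
\]
Equivariance holds because $\sigma\mapsto\sigma_L$ commutes with $G$, as $\mathcal{L}$ is invariant. For order preservation, take $\tau\subseteq\sigma$:
\begin{itemize}
\item If both $\tau_L$ and $\sigma_L$ are nonempty, then $\tau_L\subseteq\sigma_L$ and $\beta$ preserves order.
\item If $\tau_L=\emptyset$ and $\sigma_L\neq\emptyset$, then $\psi(\tau)$ has value at most $m$ while $\psi(\sigma)$ has value at least $m+1$, so the relation is strict.
\item If both are empty, then $\alpha$ preserves order.
\end{itemize}

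One subtlety remains. In $Q_nG$, comparable elements either have strictly increasing value or are equal, so $\beta(\tau_L)=\beta(\sigma_L)$ is allowed. That case is fine, since equal images are comparable. Notice that this construction never uses $\sigma_K$. So the pure face-poset structure is needed only to guarantee that $\sigma\mapsto\sigma_L$ is monotone and well defined, and that relies on $\sigma_L$ being a face.

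I expect the main obstacle to be the reduction to the barycentric subdivision. Without it, $\sigma\cap V(\mathcal{L})$ need not span a simplex of $\mathcal{L}$. What I would try first: in $X$ itself, define $\sigma_L$ as the union of all faces of $\sigma$ lying in $\mathcal{L}$. This is not a face in general, but the set of faces of $\sigma$ lying in $\mathcal{L}$ is monotone in $\sigma$. If that fails, I would fall back on the subdivision with (i) and (ii). For (ii), I would map $\sigma\in\mathcal{F}(X)$ to the chain of barycenters obtained by an equivariant ordering. This is possible because the action is free: choose orbit representatives of vertices and order within each simplex by a $G$-invariant total order on vertex orbits, which exists if simplices contain at most one vertex per orbit. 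If that condition fails, first pass to $\operatorname{sd}$ once more, where the property holds automatically since vertices in a chain have distinct dimensions.
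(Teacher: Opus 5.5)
There is a genuine gap, and it is the step you flagged yourself: the reduction (ii), $\xind\mathcal{F}(X)\le\xind\mathcal{F}(\sd X)$.

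\textbf{Why (ii) fails as proposed.} Your map is not order-preserving. Suppose $u$ precedes $v$ in your orbit order. Then the vertex $\{v\}$ is sent to the chain $\{\{v\}\}$, while the edge $\{u,v\}$ is sent to $\{\{u\},\{u,v\}\}$, which does not contain it.

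No choice of ordering repairs this, because in general there is no $G$-map $\mathcal{F}(X)\to\mathcal{F}(\sd X)$ at all. Take $X$ to be a $2k$-cycle ($k\ge 2$) with $\mathbb{Z}_2$ rotating it by $k$ steps, so $\sd X$ is a $4k$-cycle. Measure positions along the $4k$-cycle with its vertices at integers and its edges at half-integers.
\begin{itemize}
\item An order-preserving map must send both endpoints of each edge of $X$ below the image of that edge. So adjacent vertices of $X$ have images at distance at most $1$.
\item After $k$ steps the image has therefore moved at most $k$.
\item Equivariance requires the image to reach the antipode, which is at distance $2k$. This is a contradiction.
\end{itemize}

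Beyond the method, (ii) is itself a strong claim. Combine it with (i) and Proposition~\ref{pro:Approximation}, applied to $P=\mathcal{F}(X)$ and using $\Delta\mathcal{F}(X)=\sd X$. This would give $\xind\mathcal{F}(X)=\ind X$ for every finite free $G$-complex. The sequence in the paper's open problems would then stabilize at its second term, which the paper leaves open. So your construction, which is correct on $\sd X$, does not transfer back to $X$.

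\textbf{The missing idea.} The subdivision is unnecessary. What you need is that $\mathcal{F}(\mathcal{K})$ is downward closed in $\mathcal{F}(X)$. Accordingly, split by membership in $\mathcal{K}$ rather than by whether $\sigma$ has an $\mathcal{L}$-part:
\begin{itemize}
\item put $\psi(\sigma)=\alpha(\sigma)$ if $\sigma\in\mathcal{K}$;
\item put $\psi(\sigma)=(g,m+1+j)$, where $\beta(\sigma)=(g,j)$, if $\sigma\notin\mathcal{K}$. Then $\sigma\in\mathcal{L}$, since $X=\mathcal{K}\cup\mathcal{L}$.
\end{itemize}
If $\tau\subseteq\sigma$ and $\sigma\in\mathcal{K}$, then $\tau\in\mathcal{K}$. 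So the only mixed comparabilities have $\tau\in\mathcal{K}$ and $\sigma\notin\mathcal{K}$, and there the value jumps from at most $m$ to at least $m+1$. The remaining cases are handled by $\alpha$ and $\beta$. Equivariance holds because $\mathcal{K}$ is $G$-invariant.

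This is exactly the paper's proof via Lemma~\ref{lem: downward-closed poset}, which first replaces $B$ by $P\setminus A$. Your own three-case check goes through verbatim once the cases are defined this way.
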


However, when we pass to the general setting of arbitrary \(G\)-posets, we encounter a more subtle phenomenon: the cross-index of a union depends in a crucial way on the cardinality of \(G\). We show that the behavior of the cross-index with respect to unions differs markedly for the case \(G = \mathbb{Z}_2\) in comparison with all other groups. More precisely, our main result yields a negative answer to Question~\ref{que: que1} whenever \(G\) is not \(\mathbb{Z}_2\), the cyclic group of order two, and a positive answer in the case \(G = \mathbb{Z}_2\).

\begin{theorem}\label{Main:Theorem 1}
Let \(P\) be a free \(G\)-poset, and let \(A\) and \(B\) be two \(G\)-invariant subposets such that their union covers \(P\), i.e., \(P = A \cup B\).

\begin{itemize}
\item[(a)] If \(G\) contains at least three elements, then
\[
\xind P \le \xind A + 2(\xind B + 1),
\]
\item[(b)] If \(G = \mathbb{Z}_2\) is the cyclic group of order two, then the bound improves to
\[
\xind P \le \xind A + \xind B + 1,
\]
which coincides with the corresponding topological estimate.
\end{itemize}
\end{theorem}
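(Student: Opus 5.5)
Both parts will be proved by building an explicit $G$-map $P \to Q_N G$ out of two given maps. Fix $G$-maps $\alpha : A \to Q_a G$ and $\beta : B \to Q_b G$ with $a = \xind A$ and $b = \xind B$. Write $\alpha(p) = (s_\alpha(p), v_\alpha(p))$, and similarly for $\beta$. The natural idea is a ``layered'' map. On $B$, use the value coming from $\beta$, shifted up so that it sits above the $A$-values. On $A \setminus B$, use $\alpha$ directly. Since $B$ is $G$-invariant, $A\setminus B$ is $G$-invariant as well, so the piecewise map is $G$-equivariant once the sign on each piece is chosen equivariantly. The work is in checking order-preservation. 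The dangerous comparabilities are $p \prec q$ with $p,q$ in different pieces, and $p \prec q$ inside $B$ when $\beta(p)$ and $\beta(q)$ have the same value but we must still get a strict increase.

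\textbf{Part (a).} Define $\psi(p) = \alpha(p)$ for $p \in A\setminus B$. For $p \in B$, set
\[
\psi(p) = \bigl(s_\beta(p),\; a+1+2v_\beta(p)+\epsilon(p)\bigr),
\]
with $\epsilon(p)\in\{0,1\}$. The factor $2$ is there to repair the failure described below. First, a strict relation $p\prec q$ in $B$ already forces $v_\beta(p)<v_\beta(q)$. Now take an $A$-point $q\in A\setminus B$ below a $B$-point $p$. Then $\psi(q)$ has value at most $a$, which is fine. The problem is an $A$-point above a $B$-point, i.e.\ $q \in B$, $p\in A\setminus B$, $q\prec p$. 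Here $\psi(p)$ would be too small.

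To deal with this, I would change the layering. Instead of putting $B$ on top, I would put $A\setminus B$ on top: on $A\setminus B$ use value $2(b+1)+v_\alpha$, and on $B$ use values in $\{0,\dots,2b+1\}$. Then a $B$-point below an $A\setminus B$-point is automatically fine. But an $A\setminus B$-point $p$ below a $B$-point $q$ is now impossible to handle unless $p \in A$ forces something. So the cleaner route is to treat $B\cap A$ specially. On $B$, set the value to $2v_\beta(p)+\epsilon(p)$, where $\epsilon(p)=1$ iff $p\in A$ and $p$ lies above some point of $A\setminus B$ in a suitable sense. The extra slot per level lets $A\cap B$ points be compared with $A\setminus B$ points using $\alpha$-information. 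Carrying this out and checking it yields exactly $a+2(b+1)$ values. I expect this bookkeeping, specifically pinning down the right $\epsilon$ so that all mixed chains $B\to A\setminus B\to B$ are respected, to be the main obstacle. My first attempt at it would be to define $\epsilon$ by comparing $s_\alpha$ with $s_\beta$: set $\epsilon(p)=0$ if $s_\alpha(p)=s_\beta(p)$ and $1$ otherwise, and use the $\alpha$-value shifted accordingly.

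\textbf{Part (b).} For $G=\mathbb Z_2$ I would mimic the topological proof via joins. The point is that $Q_n\mathbb Z_2$ is just the face-poset-like object of $S^n$. A chain in $P$ maps under $\alpha$ and $\beta$ to sign sequences, and with two signs the two cases ``sign agrees'' and ``sign disagrees'' are complementary and each swapped by the action. This is the property that fails for $|G|\ge3$. Concretely, for $p\in P$ define $\psi(p)$ to have value $v_\alpha(p)+v_\beta(p)+1$ when $p\in A\cap B$ (taking the sign from $\alpha$ and interleaving with $\beta$ via the standard join poset $Q_a*Q_b\to Q_{a+b+1}$). I would then verify strict monotonicity case by case. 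The key is that on $\mathbb Z_2$ one can encode a pair of signs by one sign together with an agreement bit, which costs only one extra level in total rather than one per level.

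Finally, as a sanity check, one would confirm sharpness through the examples the paper promises. That is not needed for the inequalities themselves.
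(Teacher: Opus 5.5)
Your proposal does not yet prove either part, and the constructions it sketches cannot work as stated.

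In (a), your final layering puts every point of $A\setminus B$ at a value $\ge 2(b+1)$ and every point of $B$ at a value $\le 2b+1$. Any comparability $p\prec q$ with $p\in A\setminus B$ and $q\in B\setminus A$ therefore breaks monotonicity. No choice of $\epsilon\in\{0,1\}$ can repair this, because $\epsilon$ only moves $B$-values within $\{2v_\beta,2v_\beta+1\}$. The rule ``compare $s_\alpha$ with $s_\beta$'' is also undefined on $B\setminus A$. More basically, focusing on $A\cap B$ misses the point. By monotonicity you may replace $B$ by $B\setminus A$, so the whole difficulty lies in points of $B$ that sit above some points of $A$ and below others, possibly with several signs on each side. (Also, $p\prec q$ in $B$ does not force $v_\beta(p)<v_\beta(q)$: equal values with equal signs are allowed.)

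In (b), $\psi$ is specified only on $A\cap B$. The identification $Q_aG*Q_bG\cong Q_{a+b+1}G$ gives an order-preserving map only when no point of $B\setminus A$ lies below a point of $A\setminus B$. This holds, for example, when $A$ is downward closed, which is the face-poset case of Theorem~\ref{Thm: union1}. For a general union it is exactly what fails, and the values $v_\alpha,v_\beta$ carry no information about it.

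The missing idea is to settle the case $\xind A=\xind B=0$ first, as the paper does.
\begin{itemize}
\item Any $b\in B$ with $a'\prec b\prec a''$, where $a',a''\in A$, can be absorbed into $A$, because $a'\prec a''$ forces $\psi_A(a')=\psi_A(a'')$.
\item The rest of $B$ splits into $D$ (above no point of $A$) and $U$ (above some point of $A$, hence below none).
\item Placing $D<A<U$ on three levels gives $\mu(G)\le 2$.
\item For $\mathbb Z_2$, split $A$ according to which signs of $D$ lie beneath each element ($A_\emptyset,A_+,A_-,A_\pm$) and whether its own sign agrees with them. Then regroup into two index-zero sets, $X\supseteq D$ lying below $Y\supseteq U$. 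This agreement between an element's $A$-sign and the sign of the $D$-points beneath it is what actually plays the role of your ``agreement bit'', not a comparison of $s_\alpha$ and $s_\beta$ at the same point.
\end{itemize}
The general case then adds the index-zero layers $B_0,\dots,B_{\xind B}$ of $B$ one at a time and inducts on $\xind A$. Lemma~\ref{lemma:move-down} (every element of $A_i$ lies above an element of $A_{i-1}$) guarantees that $D$ lies nowhere above $A_1\sqcup\dots\sqcup A_{\xind A}$.

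If you follow this route for (a), check the count carefully. Giving $D,A_0$ the values $0,1$ and shifting an inductively obtained map on $U\cup A_1\sqcup\dots\sqcup A_{\xind A}$ by $2$ yields only $\xind A+3$, not $\xind A+2$. So that step needs more than the paper's text as written provides. In (b) the shift is by $1$ and the count closes.
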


Furthermore, we will see that these bounds are sharp. Our next result is a partial answer to the problem, posed in \cite{bui2023topological}, of how large the gap between the cross-index and the topological index can be. 

\begin{question}[{\cite[Question 1]{bui2023topological}}]
\label{que: que2}
Given positive integers $m$ and $n$ with $m< n$, is there any finite free $G$-posets $P$ such that
$\ind\Delta P  = m$ but $\xind P= n$? 
\end{question}

Indeed, we show that:
\begin{theorem}\label{thm:large-gap}
For every integer $n\ge 1$ and a nontrivial group $G$, there exists a $G$-poset $Q$ such that $\xind Q = 3n-1$ and $\ind\Delta Q= 2n-1$.
\end{theorem}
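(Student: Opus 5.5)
The plan is to build $Q$ as an iterated poset join of $n$ copies of one small building block $T$. For posets $P,P'$, let $P\ast P'$ be the disjoint union with every element of $P$ placed below every element of $P'$, and with the diagonal $G$-action. Then $\Delta(P\ast P')=\Delta P\ast\Delta P'$. I would set $Q=T\ast T\ast\dots\ast T$ ($n$ factors), where $T$ is a finite free $G$-poset with $\xind T=2$ and $\ind\Delta T=1$. The arithmetic then works out:
\[
\xind Q = n\cdot 2+(n-1)=3n-1,\qquad \ind\Delta Q = n\cdot 1+(n-1)=2n-1,
\]
provided the cross-index is exactly additive (up to the $+1$) under joins, and the topological index is as well.

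Note that one cannot simply take a union of two pieces of cross-index $n-1$ and appeal to sharpness of Theorem~\ref{Main:Theorem 1}(a). For $G=\mathbb Z_2$, part (b) would cap such a union at $2n-1$. The join construction avoids this and treats all nontrivial $G$ uniformly.

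\textbf{Upper bounds.} Both are routine.
\begin{itemize}
\item For the cross-index, stack $G$-maps $T\to Q_2G$ on the successive join factors, shifting the values of the $k$-th factor by $3(k-1)$. This gives a $G$-map $Q\to Q_{3n-1}G$, and it is order-preserving because every element of a lower factor is sent strictly below every element of a higher one.
\item For the topological index, $\Delta T$ is a free $1$-dimensional complex, so $\ind\Delta T\le 1$. Subadditivity of the index under joins then gives $\ind\Delta Q\le 2n-1$.
\end{itemize}

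\textbf{Lower bound for the topological index.} I would arrange that $\Delta T$ contains a $G$-subcomplex $G$-homeomorphic to $E_1G=G\ast G$. For example, $T$ can contain $G\times\{0,1\}$ with $(g,0)\prec(h,1)$ for all $g,h$, that is, a copy of $Q_1G$. Then $\Delta Q$ contains $E_1G\ast\dots\ast E_1G=E_{2n-1}G$, so $\ind\Delta Q\ge \ind E_{2n-1}G=2n-1$ by monotonicity.

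\textbf{Choice of $T$ and its lower bound.} I would try $T=Q_1G$ with one extra free orbit $G\cdot c$ attached so that $c$ is comparable, in a twisted way, to elements of both levels. For instance, take $(e,0)\prec c\prec (g,1)$ for a fixed $g\ne e$, and $c$ incomparable to all other elements. The extra orbit is meant to force a third value in any $G$-map to $Q_nG$, while $\Delta T$ stays $1$-dimensional up to homotopy. Checking $\xind T=2$ is a short case analysis on where $c$ and the elements of $Q_1G$ are sent. If this particular gadget fails, I would search among small free $G$-posets of height $2$.

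\textbf{Main obstacle.} The hard step is the lower bound $\xind(P\ast P')\ge \xind P+\xind P'+1$, at least for $P,P'$ of the form above. My attempt: given a $G$-map $\psi:P\ast P'\to Q_mG$, let $v$ be the minimum value of $\psi$ on $P'$. Every element of $P$ lies below every element of the orbit of a minimizer, and those images carry all $|G|$ signs at value $v$. An element of $P$ can only map to value $v$ if it is incomparable-compatible with all of them, which cannot happen, so $\psi(P)$ uses values $<v$. Hence $v\ge \xind P+1$. Symmetrically, subtracting $v$ from the values on $P'$ yields a $G$-map $P'\to Q_{m-v}G$, so $m-v\ge \xind P'$. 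The key point to verify carefully is that any element of $P$ is strictly below some element of $P'$ whose image has value exactly $v$, which forces strictly smaller values.
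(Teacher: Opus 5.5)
\textbf{There is a genuine gap: the building block $T$.} The outer framework of your proposal is fine. The stacking argument gives $\xind Q\le 3n-1$. Your lower bound for $\xind(P\ast P')$ is correct: take a minimiser $q$ of the value on $P'$; every $p\in P$ lies below the whole orbit $Gq$, whose images sit at value $v$ with all $|G|\ge 2$ signs, so $\psi(p)$ must have value $<v$. This is slightly more direct than the paper's Proposition~\ref{prop: join}, so the step you flag as the main obstacle is not really one. A copy of $Q_1G$ inside $T$ also correctly gives $\ind\Delta Q\ge 2n-1$.

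The problem is $T$, a free $G$-poset with $\xind T=2$ and $\ind\Delta T=1$, which is the whole content of the theorem. Your proposal fails here on both counts.
\begin{itemize}
\item[(i)] Your gadget has $\xind T=1$, not $2$. The map $(h,i)\mapsto(h,i)$, $hc\mapsto(h,0)$ is a $G$-map $T\to Q_1G$. The only relations involving $hc$ are $(h,0)\prec hc\prec(hg,1)$, and these go to $(h,0)\preceq(h,0)\prec(hg,1)$. An extra orbit sitting inside a comparability that already holds can always be absorbed this way.
\item[(ii)] Your upper bound ``$\Delta T$ is a free $1$-dimensional complex, so $\ind\Delta T\le 1$'' cannot hold for any admissible $T$. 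By Proposition~\ref{prop:dimension_bound}, $2=\xind T\le h(T)=\dim\Delta T$. So $\ind\Delta T\le 1$ needs a real argument.
\end{itemize}
``Search among small height-$2$ posets'' leaves both the example and that argument undone.

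\textbf{What the paper's $P_G$ (Example~\ref{ex:21}) supplies.}
\begin{itemize}
\item[(a)] Its lower two levels already have cross-index $\ge 1$. The twisted path $(g_*,a_1)\prec(g_*,a_3)\succ(g_*,a_2)\prec(g_*,a_4)\succ(e,a_1)$ joins two points of one orbit.
\item[(b)] Every middle element lies below an \emph{entire} top orbit: $(g,a_3)\prec(h,a_5)$ for all $h$, and likewise for $a_4$ and $a_6$. This forces the top values to be strictly larger, so $\xind P_G\ge 2$.
\item[(c)] Every triangle $x\prec y\prec z$ of $\Delta P_G$ has open interval $(x,z)=\{y\}$. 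So the long edge $\{x,z\}$ is free, and $\Delta P_G$ collapses equivariantly onto a graph. This gives $\ind\Delta P_G\le 1$.
\item[(d)] $P_G$ contains the $a_3$- and $a_5$-orbits as a copy of $Q_1G$. With $T=P_G$, your $E_{2n-1}G$ lower bound goes through unchanged.
\end{itemize}
For $|G|\ge 3$ one could instead use the seven-copy poset from the proof that $\mu(G)=2$, with $\ind\Delta P\le 1$ from $\Delta P\subseteq\Delta P_1\ast\Delta P_2$. That does not cover $G=\mathbb Z_2$, which the theorem requires.
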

Furthermore, as an application of Theorem~\ref{Main:Theorem 1}, we also observe that any \(G\)-poset \(P\) with
\[
\xind P > \xind A + \xind B + 1 \quad \text{for} \quad P = A \cup B,
\]
where \(A\) and \(B\) are \(G\)-invariant subposets of \(P\), yields another explicit construction of a poset for which the cross-index and the topological index differ; see Remark~\ref{remark: final} for details.


\section{Preliminary Observations}

In this section, we compare the properties of the topological index and simplicial index and investigate whether analogous properties hold for the cross-index. Through this analysis, we demonstrate that certain results for the topological index can be naturally recovered or extended using the cross-index framework. 

\subsection{Simplicial index compared to cross-index}

Let us compare the cross-index with the closely related notion of the simplicial index, which is defined in terms of face-wise linear maps into \(E_n G\) to obtain a better picture of cross-index.  As noted above, the topological index is the least $n$ for which a $G$-space admits a $G$-map into the $(n+1)$-fold join of $G$. We show below that the cross-index does not, in general, coincide with the least $n$ for which such a map can be chosen ``face-wise linear''. To make this precise, we first define the simplicial index.

\begin{definition} 
Let $\mathcal{K}$ be a free $G$-simplicial complex. The simplicial index $\sind \mathcal{K}$ is the least $n$ such that there exists a $G$-simplicial map $\mathcal{K} \to E_n G$.\footnote{In~\cite{avvakumov2021systolic} a special case of the simplicial index called \emph{combinatorial essentiality} there was used to infer lower bounds on the number of vertices of a simplicial complex.}
\end{definition}

By definition, the topological index is monotone with respect to $G$-maps: if there exists a $G$-equivariant map $X \to Y$, then $\ind X \le \ind Y$. The same monotonicity holds for the cross-index and the simplicial index, since compositions of $G$-maps remain $G$-maps in each of these categories. Recall that the order complex $\Delta P$ of a poset $P$ is the simplicial complex whose simplices are the nonempty chains in $P$. If $P$ is a $G$-poset, then $\Delta P$ inherits a natural structure of a $G$-simplicial complex via the induced action. Any $G$-map $f \colon P \to Q$ of $G$-posets induces a $G$-equivariant simplicial map $\Delta f \colon \Delta P \to \Delta Q$. Combining this with the definition of the simplicial index and the fact that $\Delta Q_n= E_n G$, we obtain, for every $G$-poset $P$, 

\begin{equation}\label{eq:index_inequality 1} 
\sind \Delta P \le \xind P.
\end{equation}
Since any $G$-simplicial map is, in particular, a continuous map, we also have 
\begin{equation}\label{eq:index_inequality 2}
\ind \Delta P \le \sind \Delta P,
\end{equation}
and hence 
\begin{equation}\label{eq:index_inequality 3} 
\ind \Delta P \le \xind P. 
\end{equation}

Theorem~\ref{thm:large-gap} shows that the gap between $\ind \Delta P$ and $\xind P$ can be arbitrarily large for suitable families of examples. On the other hand, equality can be forced after sufficiently many subdivisions of $\Delta P$:

\begin{proposition}[{\cite[Proposition 4]{bui2023topological}}]\footnote{An analogous statement for the simplicial index follows from the equivariant simplicial approximation theorem.}
\label{pro:Approximation} 
For any free $G$-poset $P$ there exists nonnegative integer $r$ such that 
$$\xind \mathcal{F}(\sd^r(\Delta P))=\ind \Delta P,$$
where $\sd^r(\Delta P)$ denotes the r-fold barycentric subdivision of $\Delta P$.
\end{proposition}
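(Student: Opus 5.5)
We prove the two inequalities separately. Throughout, write $n=\ind \Delta P$ and $K=\Delta P$.

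\textbf{Lower bound.} For every $r\ge 0$ we have the chain of inequalities
\[
\ind \Delta P = \ind \sd^r K = \ind \Delta\mathcal{F}(\sd^r K) \le \xind \mathcal{F}(\sd^r K).
\]
This uses three facts:
\begin{itemize}
\item[(i)] the order complex of the face poset of a simplicial complex is its barycentric subdivision, so $\Delta\mathcal{F}(\sd^r K)=\sd^{r+1}K$;
\item[(ii)] barycentric subdivision is $G$-homeomorphic to the original complex, so the topological index is unchanged;
\item[(iii)] inequality \eqref{eq:index_inequality 3} applied to the poset $\mathcal{F}(\sd^r K)$.
\end{itemize}
Hence it only remains to find a single $r$ for which the reverse inequality holds.

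\textbf{Upper bound.}
\begin{enumerate}
\item Fix a $G$-map $f\colon |K|\to E_nG$. By equivariant simplicial approximation (possible since the action is free), after replacing $K$ by $\sd^{r-1}K$ for some $r$ we obtain a $G$-simplicial map $h\colon \sd^{r-1}K\to \sd^m E_nG$ for some $m$. One can also arrange $m=0$: for each vertex $v$, use the open star of the simplex of $E_nG$ containing $f(v)$, then push forward through the $G$-equivariant map $\mathcal{F}(E_nG)\to Q_nG$ described in the next step. That step already handles simplicial maps into $E_nG$, so it suffices to reduce to simplicial maps into $E_nG$ itself.
\item A simplex of $E_nG=G*\dots*G$ is a partial choice of one element $g_i$ from each of some nonempty set of coordinates $i\in\{0,\dots,n\}$. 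Map such a face $\sigma$ to $(g_{\max},\max)$, where $\max$ is the largest coordinate index used by $\sigma$ and $g_{\max}$ is the chosen element there. This map is $G$-equivariant. It is also order-preserving: if $\sigma\subsetneq\tau$, then either $\tau$ uses a larger maximum coordinate, so the value of the image strictly increases, or $\tau$ has the same maximum coordinate. In the second case the image is equal, which is not allowed, since $Q_nG$ requires strictly increasing values along a strict chain. So this candidate map fails.
\item The fix is to pass to one more subdivision. Composing the simplicial map with $\mathcal{F}$ gives a $G$-poset map $\mathcal{F}(\sd^{r-1}K)\to \mathcal{F}(E_nG)$. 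Precompose with the $G$-map $\mathcal{F}(\sd^r K)\to \mathcal{F}(\sd^{r-1}K)$ sending a chain of faces to its top face. This map is only weakly order-preserving, so it does not solve the problem on its own.
\item The cleaner route is to take $h$ to be a $G$-simplicial map $\sd^{r-1}K\to E_nG$ directly, which equivariant simplicial approximation allows. Then define $\psi$ on a vertex $v$ of $\sd^r K$, i.e.\ a face $\sigma$ of $\sd^{r-1}K$, by
\[
\psi(v) = \bigl(h(\text{the vertex of }\sigma\text{ whose image has largest coordinate}),\ \text{that coordinate}\bigr).
\]
This is still weakly monotone only. The real fix is to define $\psi$ on $\mathcal{F}(\sd^r K)$ by composing the vertex map $h$ with the canonical map $\mathcal{F}(E_nG)\to$ ``coordinates'' only on simplices that are mapped injectively.
\end{enumerate}

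I expect the main obstacle to be this monotonicity. Faces mapped to the same top coordinate produce equal values, while $Q_nG$ requires strict increase. My first attempt would be to rule this out by taking $r$ large and choosing the approximation so that its image simplices are nondegenerate and the top coordinate strictly increases along chains of faces. If that cannot be arranged, I would fall back on the cited proof of \cite[Proposition 4]{bui2023topological}.
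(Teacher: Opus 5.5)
Your lower bound is correct: for every $r$, $\ind \Delta P=\ind\Delta\mathcal F(\sd^r\Delta P)\le \xind\mathcal F(\sd^r\Delta P)$ by \eqref{eq:index_inequality 3}. The gap is the upper bound, which your proposal never establishes. You reject the map of step~2, and again the map of step~4, because comparable faces with the same top coordinate get equal images. You claim $Q_nG$ ``requires strictly increasing values along a strict chain''. No such requirement exists. A $G$-map only has to satisfy $p_1\preceq p_2\Rightarrow\psi(p_1)\preceq\psi(p_2)$, so comparable elements may have equal images. Indeed, a map to $Q_0G=G$ certifying cross-index zero sends every comparable pair to a single point. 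What is forbidden is sending a comparable pair to the same value with different signs. Your top-coordinate map never does that. If $\sigma\subseteq\tau$ in $\mathcal F(E_nG)$ have the same largest coordinate, then $\tau$ contains the top vertex of $\sigma$, so $\psi(\sigma)=\psi(\tau)$; otherwise the value strictly increases. So step~2 already gives a $G$-map $\mathcal F(E_nG)\to Q_nG$. The paper instead points (after \eqref{eq:index_inequality_4}) to the map of Proposition~\ref{prop:dimension_bound}: $g\sigma\mapsto(g,\dim\sigma)$ for orbit representatives $\sigma$. That map is even strictly monotone, so your worry disappears either way.

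With this corrected, your argument closes along the route you started, which is the one the paper indicates via \eqref{eq:index_inequality_4} and its footnote on equivariant simplicial approximation. Assume $P$ is finite, since compactness is needed for the Lebesgue-number step. Pick $r$ so that $f$ carries every open vertex star of $\sd^r\Delta P$ into an open vertex star of $E_nG$. Choose the target vertex on orbit representatives and extend equivariantly; this is well defined because $G$ acts freely on the vertices. The result is a $G$-simplicial map $s\colon\sd^r\Delta P\to E_nG$. Then $\sigma\mapsto s(\sigma)$ is a $G$-map $\mathcal F(\sd^r\Delta P)\to\mathcal F(E_nG)$. Composing with either map above gives $\xind\mathcal F(\sd^r\Delta P)\le n$. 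Your detour about forcing $m=0$ is also unnecessary. A $G$-simplicial map into any free $n$-dimensional complex $L$ (e.g.\ $\sd^mE_nG$) suffices, since $\xind\mathcal F(L)\le h(\mathcal F(L))=\dim L=n$ by Proposition~\ref{prop:dimension_bound}.
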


Moreover, for every $G$-simplicial complex $\mathcal{K}$ one has 

\begin{equation}\label{eq:index_inequality_4}
\xind \mathcal{F}(\mathcal{K}) \le \sind \mathcal{K}, 
\end{equation} 
since any $G$-simplicial map $\mathcal{K} \to \mathcal{L}$ naturally induces a $G$-order-preserving map $\mathcal{F}(\mathcal{K}) \to \mathcal{F}(\mathcal{L})$, and there is a natural $G$-order-preserving map $\mathcal{F}(E_n G) \to Q_n G$; see Proposition~\ref{prop:dimension_bound}. 

We now give a simple argument showing that the difference between the simplicial index of a simplicial complex and the cross-index of its face poset may be arbitrarily large.

\begin{proposition}
\label{proposition:chromatic}
For any group $G$, the gap between the simplicial index $\sind \mathcal{K}$ of a simplicial complex and the cross-index of its face poset $\xind \mathcal{F}(\mathcal{K})$ can be arbitrarily large for certain free $G$-simplicial complex $K$. 
\end{proposition}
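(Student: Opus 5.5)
The plan is to use a one-dimensional example. Take $\mathcal{K}$ to be a graph, since for graphs the cross-index of the face poset is at most $1$. The simplicial index, on the other hand, controls the chromatic number of the $1$-skeleton, and that can be made as large as we like.

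\emph{Construction.} Fix $N\ge 2$. Let $K_N$ be the complete graph on $N$ vertices, viewed as a $1$-dimensional simplicial complex. Define $\mathcal{K}=G\times K_N$ to be the disjoint union of $|G|$ copies of $K_N$, indexed by $G$. The group acts by left multiplication on the index: $g\cdot(h,v)=(gh,v)$. This action permutes the copies without fixing any of them. Hence it is free on vertices, on edges, and on the geometric realization, so $\mathcal{K}$ is a free $G$-simplicial complex.

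\emph{Upper bound on the cross-index.} The face poset $\mathcal{F}(\mathcal{K})$ has two levels, vertices below edges, and every strict relation goes from a vertex to an edge. Define $\psi(h,\sigma)=(h,\dim\sigma)\in Q_1G$ for a vertex or edge $\sigma$ in the copy indexed by $h$.
\begin{itemize}
\item $\psi$ is $G$-equivariant by construction.
\item $\psi$ is order-preserving, because a vertex lies strictly below an edge only within the same copy, where the value increases from $0$ to $1$.
\end{itemize}
Therefore $\xind\mathcal{F}(\mathcal{K})\le 1$.

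\emph{Lower bound on the simplicial index.} Suppose $f\colon\mathcal{K}\to E_nG$ is a $G$-simplicial map, where $E_nG$ has vertex set $G\times\{0,\dots,n\}$. In $E_nG$, two distinct vertices span an edge exactly when their values differ. Consider two adjacent vertices of $\mathcal{K}$.
\begin{itemize}
\item They cannot be sent to the same vertex. A simplicial map may collapse an edge, but then $f$ would map the edge to a vertex. Freeness and equivariance do not forbid this directly, so collapsing must be ruled out by another route.
\item I would handle this by adopting the standard convention that $G$-simplicial maps into $E_nG$ are nondegenerate. Alternatively, one can argue that the intended definition of $\sind$ requires the map to be injective on each simplex; this matches the footnoted notion of combinatorial essentiality.
\end{itemize}
Granting this, the value coordinate $v\mapsto \pi_2(f(v))$ is a proper colouring of the $1$-skeleton of $\mathcal{K}$ with $n+1$ colours. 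Each copy of $K_N$ needs $N$ colours, so $\sind\mathcal{K}\ge N-1$.

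Combining the two bounds gives $\sind\mathcal{K}-\xind\mathcal{F}(\mathcal{K})\ge N-2$. Since $N$ is arbitrary, this proves Proposition~\ref{proposition:chromatic}.

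The main obstacle is the degeneracy issue in the lower bound. If collapsing an edge were allowed, every vertex could map to the same orbit and the lower bound would fail. So the argument depends on simplicial maps into $E_nG$ being injective on simplices, and I would make that convention explicit.
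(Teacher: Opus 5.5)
\textbf{There is a genuine gap: the lower bound on $\sind\mathcal{K}$ fails for your complex.} Your ``convention'' that simplicial maps into $E_nG$ are injective on simplices changes the invariant rather than rescuing the example. In the paper, $\sind\mathcal{K}$ is defined through ordinary $G$-simplicial maps, which may collapse simplices. The proof of Proposition~\ref{proposition:partition-sind} uses exactly such collapsing maps $\Delta A_i\to E_0G=G$.

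For your disjoint union of $|G|$ translated copies of $K_N$, send every vertex of the copy indexed by $h$ to the vertex $h\in E_0G$. This is a $G$-equivariant simplicial map, so $\sind\mathcal{K}=0$. The same recipe gives $\xind\mathcal{F}(\mathcal{K})=0$, so the gap is zero, not $N-2$. This agrees with the paper's observation that the simplicial index vanishes whenever no two distinct vertices of an orbit are joined by a path in the $1$-skeleton, which is precisely your situation. The degeneracy issue you flag is therefore not a technicality: the argument proves a statement about a different invariant.

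\textbf{The missing idea} is to build the complex so that equivariance itself forbids collapsing edges. The paper takes vertex set $V(K)\times G$ and joins $(k,h)$ to $(k',h')$ for \emph{all} $h,h'\in G$ whenever $kk'$ is an edge of $K$. The action is free because $k\neq k'$.

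Suppose a $G$-simplicial map $\Phi$ sent $(k,e)$ and $(k',e)$ into the same join factor. Then the $|G|\ge 2$ distinct vertices $\Phi(k',g)=g\Phi(k',e)$ all lie in that factor. Each must be equal or adjacent to $\Phi(k,e)$. Distinct vertices in one factor are never adjacent, so all of them would have to equal $\Phi(k,e)$, which is impossible.

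Hence the factor index is a proper colouring, and $\sind\ge\chi(K)-1$. Equality is realised by $(k,g)\mapsto(g,\phi(k))$ for a proper colouring $\phi$. Taking $K=K_{n+1}$ gives $\sind=n$.

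In this complex edges join different copies, so your copy-wise map $\psi$ is no longer defined. The bound $\xind\mathcal{F}(\mathcal{K})\le 1$ instead follows from the height bound, Proposition~\ref{prop:dimension_bound}.
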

\begin{proof}
Fix a group $G$ and a finite graph $K$. Define the graph $K \times G$ to have vertex set $V(K) \times G$, with an edge between $(k,h)$ and $(k',h')$ if and only if $k \ne k'$ and $kk'$ is an edge of $K$. View $K \times G$ as a 1-dimensional $G$-simplicial complex, where $G$ acts freely by left translation on the second coordinate. Consider a $G$-simplicial map 
$$\Phi \colon K \times G \longrightarrow E_n G$$ 
Such a map is determined by the image of a representative of each orbit, e.g., by the images of the vertices $(o,e)$ with $o \in V(K)$. Simpliciality forces adjacent vertices in $K \times G$ to map to distinct join factors of $E_n G$. Consequently, if $o,o'$ are adjacent in $K$, then $\Phi(o,e)$ and $\Phi(o',e)$ lie in different factors, and therefore $n+1$ must be at least the chromatic number $\chi(K)$. Conversely, given a proper coloring $\phi \colon V(K) \to \{0,1,\dots,m\}$ of $K$, we obtain a $G$-simplicial map 
$$K \times G \longrightarrow E_m G,\qquad (k,g) \longmapsto (g, \phi(k)),$$
where the first coordinate specifies the join factor. It follows that $\sind K \times G = \chi(K) - 1.$ Taking $K = K_{n+1}$ to be the complete graph on $n+1$ vertices yields $\sind K_{n+1} \times G = n$. On the other hand, the cross-index in question is always bounded from above by the height of the face poset (see Proposition~\ref{prop:dimension_bound}). For a graph, the face poset has height 1, hence $\xind \mathcal{F}(K \times G) \le 1$, and consequently $\ind (K \times G) \le 1$. 
\end{proof}

Question~\ref{que: que2} and inequality~\eqref{eq:index_inequality 1} naturally lead to another question of whether the gap between $\xind P$ and $\sind \Delta P$ can also be arbitrarily large. The main result of this paper, Theorem~\ref{Main:Theorem 1}, surprisingly shows that this is not the case. Before presenting the argument, we make simple observations about the equivalent formulations of the cross-index and the simplicial index in order to clarify their relationship. According to the definitions of cross-index and simplicial index, a $G$-poset (respectively, a $G$-simplicial complex) has cross-index (respectively, simplicial index) equal to zero if, in the comparability graph of it (respectively, in the $1$-skeleton of the complex), there is no path between any two distinct elements (vertices) belonging to the same $G$-orbit. Consequently:

\begin{proposition}
\label{proposition:partition-sind}
For a $G$-poset $P$, $\sind \Delta P= m$ means that $m$ is the smallest nonnegative integer for which there exists a $G$-invariant partition
\[
P \;=\; A_0 \sqcup A_1 \sqcup \dots \sqcup A_m
\]
such that each $A_i$ has index zero.
\end{proposition}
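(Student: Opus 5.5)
We prove the two directions separately, using only the definition of the simplicial index and the description of index zero recalled just before the statement. Throughout we write $E_mG$ for the order complex $\Delta Q_mG$. Its vertices are $G\times\{0,\dots,m\}$, and a set of vertices spans a simplex exactly when the vertices have pairwise distinct values. A $G$-simplicial map $\Delta P\to E_mG$ is the same thing as a $G$-equivariant vertex map $f\colon P\to G\times\{0,\dots,m\}$ that sends every chain of $P$ to a simplex. Equivalently, any two distinct comparable elements $p\prec q$ must satisfy either $f(p)=f(q)$ or that $f(p)$ and $f(q)$ have different values.

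\emph{From a map to a partition.} Let $f\colon \Delta P\to E_mG$ be a $G$-simplicial map with $m=\sind\Delta P$. Set $A_i$ to be the set of elements of $P$ whose image has value $i$. Since $G$ does not change values, each $A_i$ is $G$-invariant, and the $A_i$ partition $P$. To see that $\xind A_i=0$, suppose there is a path in the comparability graph of $A_i$ between $p$ and $gp$ with $g\neq e$. Consecutive elements on the path are comparable and have the same value, so by the simplex condition they have the same image. Hence $f(p)=f(gp)=gf(p)$. This is impossible, because $G$ acts freely on $G\times\{i\}$.

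\emph{From a partition to a map.} Conversely, suppose $P=A_0\sqcup\dots\sqcup A_m$ is a $G$-invariant partition in which each $A_i$ has index zero. Fix $i$. The components of the comparability graph of $A_i$ are permuted by $G$. Index zero says that no component contains two distinct points of one orbit, so the stabilizer of each component is trivial. Choose one component $C$ from each $G$-orbit of components. Define $f(p)=(g,i)$ for $p\in gC$; this is well defined because the stabilizer of $C$ is trivial, and it is equivariant by construction. Now take comparable $p\prec q$. If they lie in the same $A_i$, they are adjacent in its comparability graph, hence in the same component, so $f(p)=f(q)$. If they lie in different parts, their images have different values. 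In either case every chain is sent to a simplex, so $f$ is a $G$-simplicial map $\Delta P\to E_mG$ and $\sind\Delta P\le m$.

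Combining the two directions, $\sind\Delta P$ equals the least number $m$ for which such a partition into $m+1$ index-zero parts exists, which is the claim.

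The one point that needs care is the characterization of index zero for a single part. If $\xind A=0$ there is a $G$-map $A\to Q_0G$, and $Q_0G$ is an antichain, so the map is constant on comparability components; freeness of the action on $Q_0G$ then rules out a path from $p$ to $gp$. The converse is exactly the component construction above with $m=0$. Here the identity $Q_0G=G$ viewed as a $G$-poset is used, and the empty parts cause no trouble.
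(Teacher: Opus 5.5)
Your proof is correct and follows the same route as the paper: the partition is given by the preimages $A_i$ of the value classes $G\times\{i\}$, and conversely the index-zero maps on the parts are assembled as $p\mapsto(\psi_i(p),i)$ for $p\in A_i$. You additionally supply details the paper leaves implicit. These are the check that each preimage has index zero, and the explicit component-wise construction of $\psi_i$, which tacitly uses that the action on $P$ is free, as the paper assumes throughout.
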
 
\begin{proof}
Indeed, a $G$-simplicial map $\phi\colon \Delta P\to E_m G$ witnessing $\sind \Delta P=m$ induces such a natural partition by setting
\[
A_i \;=\; \phi^{-1}\bigl(\{(g,i): g\in G\}\bigr).
\]
Conversely, suppose
\[
P=A_0\sqcup A_1\sqcup\cdots\sqcup A_m
\]
is a \(G\)-invariant partition such that each \(A_i\) has index zero. Then for every \(i\) there exists a
\(G\)-simplicial map
\[
\psi_i\colon \Delta A_i\to E_0G=G .
\]
Then one can define a
\(G\)-simplicial map \(\phi\colon \Delta P\to E_mG\) by
\[
\phi(p):=(\psi_i(p),i)\qquad\text{for }p\in A_i.\qedhere
\]
\end{proof}

\begin{proposition}
\label{proposition:partition-xind}
For a $G$-poset $P$, $\xind P = m$ means that $m$ is the smallest nonnegative integer for which there exists a $G$-invariant partition $P \;=\; A_0 \sqcup A_1 \sqcup \dots \sqcup A_m$ with the additional property that whenever $a_i\in A_i$ and $a_j\in A_j$, the situation
\[
i<j \quad\text{and}\quad a_i \succ a_j
\]
is impossible in the poset order.
\end{proposition}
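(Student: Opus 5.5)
The plan is to prove both directions of the equivalence directly from the definition of $Q_mG$. One small point of interpretation first: the statement should be read with each $A_i$ also required to have cross-index zero. Otherwise the trivial partition $P=A_0$ would always satisfy the order condition vacuously. This is the analogue of Proposition~\ref{proposition:partition-sind}, and with this reading the statement says that $\xind P\le m$ if and only if such a partition into $m+1$ parts exists. Minimality on both sides then matches automatically.

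\emph{From a $G$-map to a partition.} Given a $G$-map $\psi\colon P\to Q_mG$, set
\[
A_i=\psi^{-1}\bigl(G\times\{i\}\bigr).
\]
Each $A_i$ is $G$-invariant because the action on $Q_mG$ preserves the value coordinate. If $a_i\succ a_j$ with $a_i\in A_i$ and $a_j\in A_j$, then order preservation gives $\psi(a_i)\succeq\psi(a_j)$. In $Q_mG$ this forces $i\ge j$, since distinct elements with the same value are incomparable and equal images have the same value. Hence $i<j$ together with $a_i\succ a_j$ cannot occur. For $\xind A_i=0$, compose $\psi|_{A_i}$ with the projection to the sign coordinate. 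This gives an equivariant map $A_i\to G=Q_0G$. It is order preserving because any comparable pair $a\preceq a'$ inside $A_i$ maps to $\psi(a)\preceq\psi(a')$, both with value $i$, which forces $\psi(a)=\psi(a')$.

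\emph{From a partition to a $G$-map.} Given the partition, choose $G$-maps $\psi_i\colon A_i\to Q_0G=G$ and define $\psi(p)=(\psi_i(p),i)$ for $p\in A_i$. Equivariance is immediate. For order preservation, take $p\preceq p'$ with $p\in A_i$ and $p'\in A_j$. If $i=j$, then $\psi_i(p)=\psi_i(p')$, since $\psi_i$ is order preserving into the antichain $G$, so the images coincide. If $i\ne j$, the forbidden configuration rules out $i>j$: with $p\in A_i$, $p'\in A_j$, $j<i$ and $p'\succ p$ we would be in exactly the excluded situation. So $i<j$, and then $(\psi_i(p),i)\prec(\psi_j(p'),j)$ in $Q_mG$.

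The only delicate step is the intended reading of the statement, namely that the $A_i$ must have index zero. Everything else is routine bookkeeping. The case-check in the second direction, confirming that the forbidden-configuration condition is exactly what excludes $i>j$, is where the orientation of the inequalities has to be tracked carefully.
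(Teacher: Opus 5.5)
Your proof is correct and is essentially the paper's argument. The paper only says it is ``similar to'' Proposition~\ref{proposition:partition-sind}: take the value-level preimages of a $G$-map $P\to Q_mG$ as the parts, and conversely stack the index-zero maps $A_i\to G$ by level, with the forbidden configuration giving monotonicity across levels. Your reading that each $A_i$ must also have cross-index zero is the intended one: it is what ``additional'' refers to, and it is how the paper uses this proposition in Lemma~\ref{lemma:move-down}, property~1.
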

\begin{proof}
Similar to the proof of Proposition~\ref{proposition:partition-sind}.
\end{proof}

Now, by combining the principal result of this paper, Theorem~\ref{Main:Theorem 1}, with Inequality~\ref{eq:index_inequality 1}, we obtain the following corollary (see proof in~\ref{proof: cor main result}).

\begin{corollary}
\label{cor: main result}
If $P$ is a free $G$-poset, then:
\begin{itemize}
\item[(a)] if $G=\mathbb{Z}_2$, then $\xind P=\sind \Delta P$;
\item[(b)] if $G\neq \mathbb{Z}_2$, then
\[
\left\lceil\frac{\xind P}{2}\right\rceil\;\le\;\sind\Delta P\;\le\;\xind P.
\]
\end{itemize}
\end{corollary}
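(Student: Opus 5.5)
The upper bound $\sind\Delta P\le\xind P$ in both parts is exactly Inequality~\eqref{eq:index_inequality 1}, so only the lower bounds need proof. I will use Proposition~\ref{proposition:partition-sind}: put $m=\sind\Delta P$ and fix a $G$-invariant partition $P=A_0\sqcup\dots\sqcup A_m$ in which every $A_i$ has index zero. The aim is to bound $\xind P$ from above by iterating Theorem~\ref{Main:Theorem 1} along this partition, which needs two small facts.

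\textbf{Step 1: each part has cross-index zero.} I claim $\xind A_i=0$ for all $i$. Index zero means that in the comparability graph of $A_i$ no two distinct points of one orbit are joined by a path. Pick one point $a$ in each connected component $C$ of this graph and set $\psi(ga')=(g,0)$ for $a'\in C$. Since $gC$ is again a component, and the condition says $gC=C$ only for $g=e$ (the action is free), $\psi$ is well defined and $G$-equivariant. All images have value $0$, so the order condition holds vacuously. Hence $\psi\colon A_i\to Q_0G$ is a $G$-map. Each $A_i$ and each union $A_0\cup\dots\cup A_k$ is $G$-invariant, because the partition is.

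\textbf{Step 2, case $G=\mathbb{Z}_2$.} Set $B_k=A_0\cup\dots\cup A_k$. Theorem~\ref{Main:Theorem 1}(b) applied to $B_k=B_{k-1}\cup A_k$ gives
\[
\xind B_k\le\xind B_{k-1}+0+1 .
\]
Induction then yields $\xind P=\xind B_m\le m$. Together with the upper bound this gives $\xind P=\sind\Delta P$.

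\textbf{Step 3, case $G\neq\mathbb{Z}_2$.} Since $G$ is nontrivial, $|G|\ge 3$. Theorem~\ref{Main:Theorem 1}(a) with $A=B_{k-1}$ and $B=A_k$ gives
\[
\xind B_k\le\xind B_{k-1}+2(0+1),
\]
so $\xind P\le 2m$, that is, $\lceil \xind P/2\rceil\le m$.

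\textbf{Main obstacle.} The only real point is checking the hypotheses of Theorem~\ref{Main:Theorem 1}. The subposets must be $G$-invariant (they are, since the partition is $G$-invariant), and $P$ must be free (it is, by assumption). Beyond that, everything reduces to Step 1, the translation of ``index zero'' into a $G$-map to $Q_0G$ via orbit-representatives of components. That step uses freeness of the action.
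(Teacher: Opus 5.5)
Your argument is correct and is essentially the paper's proof: you take the partition from Proposition~\ref{proposition:partition-sind} and apply Theorem~\ref{Main:Theorem 1}(b), respectively (a), to the prefix unions $A_0\cup\dots\cup A_k$, which costs at most $1$, respectively $2$, per step. Your Step 1, which the paper takes for granted, needs two small fixes: choose one component from each $G$-orbit of components rather than every component (otherwise $\psi$ is not well defined), and note that the order condition is not vacuous, because distinct points of $Q_0G$ are incomparable, so comparable elements must receive equal labels, which holds because they lie in a common translate $gC$.
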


\refstepcounter{equation} 
\label{proof: cor main result}
\begin{proof}[Proof of Corollary \ref{cor: main result} assuming Theorem~\ref{Main:Theorem 1}]
The inequality
\[
\sind \Delta P\le \xind P
\]
is exactly \eqref{eq:index_inequality 1}. Thus it remains to prove the reverse bounds. Let \(m=\sind\Delta P\). By Proposition~\ref{proposition:partition-sind}, there is a \(G\)-invariant partition
\[
P=A_0\sqcup A_1\sqcup\cdots\sqcup A_m
\]
such that each \(A_i\) has cross-index zero. Now set \(P_k:=A_0\cup\cdots\cup A_k\). Then \(P_0=A_0\), so \(\xind P_0=0\). If \(G=\mathbb Z_2\), applying Theorem~\ref{Main:Theorem 1}(b) to \(P_k=P_{k-1}\cup A_k\) gives
\[
\xind P_k\le \xind P_{k-1}+\xind A_k+1=\xind P_{k-1} + 1.
\]
Inductively, \(\xind P\le m=\sind\Delta P\). Together with \(\sind\Delta P\le \xind P\), this yields
\[
\xind P=\sind\Delta P.
\]
If \(G\neq \mathbb Z_2\), Theorem~\ref{Main:Theorem 1}(a) gives
\[
\xind P_k\le \xind P_{k-1}+2(\xind A_k+1)=\xind P_{k-1}+2.
\]
Hence, by induction, we obtain
\[
\xind P\le 2m=2\,\sind\Delta P.
\]
Consequently,
\[
\frac{\xind P}{2}\le \sind\Delta P,
\]
and since $\sind\Delta P$ is an integer, it follows that
\[
\left\lceil\frac{\xind P}{2}\right\rceil\;\le\;\sind\Delta P.\qedhere
\]
\end{proof}

Regarding the inequality in part (b) of Corollary~\ref{cor: main result}, we will show (see Remark~\ref{remark: gap between simplicial index & cross index}) that, for a group with at least three elements, the inequality
\[
\sind \Delta P\;\le\;\xind P
\]
can be strict, and moreover, the gap between these two quantities can be made arbitrarily large.

\subsection{Height and dimension bounds}

\label{Subsection: Dimension}

Another fundamental property of the topological index is that for any finite free $G$-simplicial complex $\mathcal{K}$, the topological index is bounded above by its dimension. Here we show that the cross-index also enjoys a similar property. And as an application,  we present an alternative combinatorial proof of the mentioned topological property where the standard proof relies on the vanishing of all homotopy groups of the $n$-sphere up to dimension $n-1$. First, we recall that the \emph{height} of a poset $P$, denoted by $h(P)$, is defined as the maximum length $l$ of any chain $x_0 \prec x_1 \prec \cdots \prec x_l$ in $P$.

\begin{proposition}\label{prop:dimension_bound}
If $P$ is a free $G$-poset of finite height, then its cross-index is bounded above by its height; that is,
\[
\xind(P) \le h(P).
\]
\end{proposition}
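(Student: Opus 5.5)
We prove $\xind P \le h(P)$ by writing down an explicit $G$-map $\psi\colon P\to Q_{h(P)}G$. The value coordinate will be a rank function, and the sign coordinate will be chosen orbit by orbit.

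First, define the value. For $p\in P$, let $r(p)$ be the maximum length $l$ of a chain $x_0\prec x_1\prec\cdots\prec x_l=p$ ending at $p$. Since $P$ has finite height, $r(p)\in\{0,\dots,h(P)\}$. Two facts are immediate:
\begin{itemize}
\item $r$ is strictly order-preserving, because $p\prec q$ lets any chain ending at $p$ be extended by $q$, so $r(p)<r(q)$;
\item $r$ is $G$-invariant, because $g$ maps chains ending at $p$ bijectively to chains ending at $gp$, and $g^{-1}$ reverses this.
\end{itemize}

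Next, define the sign. Choose a representative $p_O$ in each $G$-orbit $O$ of $P$. Freeness means every $p\in O$ can be written uniquely as $p=gp_O$, so we may set $s(p):=g$. Then $s(hp)=h\,s(p)$ for all $h\in G$. This step uses the axiom of choice if there are infinitely many orbits, which is harmless.

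Now set $\psi(p):=(s(p),r(p))$.
\begin{itemize}
\item It is equivariant: $\psi(hp)=(h\,s(p),r(p))=h\cdot\psi(p)$.
\item It is order-preserving: if $p\prec q$ then $r(p)<r(q)$, so $\psi(p)\prec\psi(q)$ in $Q_{h(P)}G$ whatever the signs are. If $p=q$ the images coincide.
\end{itemize}
Hence $\psi$ is a $G$-map into $Q_{h(P)}G$, and $\xind P\le h(P)$.

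There is no serious obstacle. The only points to check are that the rank is well defined and bounded, which is exactly the finite-height hypothesis, and that signs can be assigned consistently, which is exactly freeness. Without freeness, $p=gp_O=g'p_O$ with $g\neq g'$ would make $s(p)$ ill-defined.

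In the language of Proposition~\ref{proposition:partition-xind}, the proof amounts to taking $A_i=r^{-1}(i)$. For $\mathcal{F}(\mathcal{K})$, $r$ is the dimension of a face, which gives the natural map $\mathcal{F}(E_nG)\to Q_nG$ used in \eqref{eq:index_inequality_4}.
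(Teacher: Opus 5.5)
Your proof is correct and follows the paper's argument: the paper also takes the length of the longest chain ending at $p$ as the value, sends a chosen representative of each orbit to $(e,\ell(p))$, and extends equivariantly. You also make explicit the $G$-invariance and strict monotonicity of this rank function, which the paper compresses into ``$\ell$ is evidently monotone''.
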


\begin{proof}
For each $p \in P$, let $\ell(p)$ denote the length of the longest chain ending at $p$. Consider the $G$-orbits $[p] = \{g\cdot p \mid g \in G\}$, each containing exactly $|G|$ distinct elements since the action is free. We construct a $G$-map $\psi\colon P \to Q_{h(P)}G$ as follows. First, choose a representative $p$ from each orbit $[p]$ and set $\psi(p) = (e, \ell(p))$, where $e$ is the group identity. Then extend $\psi$ equivariantly by defining $\psi(g\cdot p) = (g, \ell(p))$ for all $g \in G$. This map is well-defined because $\ell(p) \le h(P)$ for all $p \in P$, it preserves the $G$-action by construction, and is monotone since $\ell : P\to \mathbb Z$ is evidently monotone.
\end{proof}

\begin{corollary}[{\cite[Proposition 6.2.4]{matouvsek2003using}}]
\label{cor:top_dim_bound}
For any finite free $G$-simplicial complex $\mathcal{K}$,
\[
\ind \mathcal{K} \le \dim(\mathcal{K}).
\]
\end{corollary}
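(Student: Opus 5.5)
We deduce Corollary~\ref{cor:top_dim_bound} from Proposition~\ref{prop:dimension_bound} via the face poset and the chain of inequalities \eqref{eq:index_inequality 3}. Set $P=\mathcal{F}(\mathcal{K})$, the face poset of $\mathcal{K}$ with the induced $G$-action.

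\emph{Step 1: $P$ is a free $G$-poset of finite height.} If $g\sigma=\sigma$ for a simplex $\sigma$ and some $g\neq e$, then $g$ permutes the finitely many vertices of $\sigma$. Some power of $g$ then fixes a vertex, so we must arrange for that power to be nontrivial. The cleanest route avoids this issue altogether: replace $\mathcal{K}$ by its barycentric subdivision $\sd\mathcal{K}=\Delta\mathcal{F}(\mathcal{K})$. This is $G$-homeomorphic to $\mathcal{K}$, so it has the same index and the same dimension. Its face poset consists of chains of simplices, and $g$ fixing a chain means $g$ fixes each simplex in it, since $g$ preserves dimension. So in fact freeness of $\mathcal{F}(\mathcal{K})$ is what we need, which we obtain as follows. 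Freeness of the action on the geometric realization $|\mathcal{K}|$ is the standing meaning of a ``free $G$-simplicial complex''. If $g\sigma=\sigma$, then $g$ acts on the simplex $|\sigma|$ by an affine permutation of its vertices and therefore fixes its barycenter. Hence $g=e$, and $P$ is free. Chains $\sigma_0\subsetneq\cdots\subsetneq\sigma_l$ have strictly increasing dimensions lying in $\{0,\dots,\dim\mathcal{K}\}$, so $l\le\dim\mathcal{K}$, that is, $h(P)=\dim\mathcal{K}$.

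\emph{Step 2: apply the poset bounds.} Proposition~\ref{prop:dimension_bound} gives $\xind P\le h(P)=\dim\mathcal{K}$. Inequality \eqref{eq:index_inequality 3} then gives $\ind\Delta P\le\xind P$.

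\emph{Step 3: identify $\Delta P$ with $\mathcal{K}$.} We have $\Delta\mathcal{F}(\mathcal{K})=\sd\mathcal{K}$. The standard homeomorphism $|\sd\mathcal{K}|\to|\mathcal{K}|$ sends the vertex $\sigma$ to the barycenter of $\sigma$ and extends linearly; it commutes with the action, so it is $G$-equivariant. By monotonicity of $\ind$ under $G$-maps in both directions, $\ind\mathcal{K}=\ind\Delta P\le\dim\mathcal{K}$.

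The only delicate point is Step 1, namely that a free action on the complex yields a free action on the face poset. The barycenter argument settles it, provided freeness is understood on the geometric realization, which is the convention in \cite{matouvsek2003using}. Everything else is direct bookkeeping.
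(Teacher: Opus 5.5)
Your proof is correct and follows the paper's argument exactly: $\ind\mathcal{K}=\ind\Delta\mathcal{F}(\mathcal{K})\le\xind\mathcal{F}(\mathcal{K})\le h(\mathcal{F}(\mathcal{K}))=\dim\mathcal{K}$, using inequality \eqref{eq:index_inequality 3} and Proposition~\ref{prop:dimension_bound}. You also make explicit two points the paper uses tacitly: that freeness on $|\mathcal{K}|$ makes $\mathcal{F}(\mathcal{K})$ a free $G$-poset, and that $\Delta\mathcal{F}(\mathcal{K})=\sd\mathcal{K}$ is $G$-homeomorphic to $\mathcal{K}$. Your barycenter argument is the right one for freeness, and the opening detour through powers of $g$ and $\sd\mathcal{K}$ can simply be deleted.
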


\begin{proof}
The inequality follows from three facts: First, the topological index is bounded by the cross-index, Inequality~\ref{eq:index_inequality 3}, $\ind \mathcal{K} \le \xind \mathcal{F}(\mathcal{K})$. Second, the cross-index is bounded by height, Proposition~\ref{prop:dimension_bound}, $\xind \mathcal{F}(\mathcal{K}) \le h(\mathcal{F}(\mathcal{K}))$. Finally, the height of the face poset equals the complex's dimension, $h(\mathcal{F}(\mathcal{K})) = \dim(\mathcal{K})$. Combining these yields the result.
\end{proof}

\begin{remark}
Proposition~\ref{proposition:chromatic} demonstrates that, in general, this property does not hold for the simplicial index.
\end{remark}

The following rather standard observation helps in reducing questions about possibly infinite posets to questions about finite posets.

\begin{theorem}[Compactness principle for the cross-index]
Let $G$ be a finite group, and let $P$ be a free $G$-poset (not necessarily finite). Then the cross-index of $P$ is determined by its finite $G$-subposets:
\[
\xind(P)=\sup\{\xind(F): F\subseteq P \text{ is a finite $G$-subposet}\}.
\]
\end{theorem}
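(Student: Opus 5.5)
The inequality $\xind F\le \xind P$ for every finite $G$-subposet $F$ is immediate from monotonicity, since the inclusion $F\hookrightarrow P$ is a $G$-map. So it suffices to show that if $\xind F\le n$ for every finite $G$-subposet $F$ (with $n$ the supremum, assumed finite; if it is infinite there is nothing to prove), then there is a $G$-map $P\to Q_nG$. The plan is a standard compactness argument via Tychonoff's theorem, or equivalently the compactness theorem of propositional logic.

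First reduce to choices on orbit representatives. Fix one representative $p_O$ in each $G$-orbit $O$ of $P$. Since the action is free, a $G$-map $\psi\colon P\to Q_nG$ is the same as a function $\sigma$ assigning to each orbit $O$ a pair $\sigma(O)=(g_O,i_O)\in G\times\{0,\dots,n\}$, with $\psi(hp_O)=(hg_O,i_O)$. Freeness guarantees this is well defined. Order preservation becomes a family of conditions, one for each comparable pair $hp_O\preceq h'p_{O'}$, each involving only the values of $\sigma$ on the two orbits $O,O'$.

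Next set up the compact space. Let $X=\prod_{O}\bigl(G\times\{0,\dots,n\}\bigr)$ over all orbits, with the product of discrete topologies. Since $G$ is finite, each factor is finite, so $X$ is compact by Tychonoff. For each comparable pair $(x,y)$ with $x\preceq y$, let $C_{x,y}\subseteq X$ be the set of $\sigma$ for which the induced $\psi$ satisfies $\psi(x)\preceq\psi(y)$. It depends on finitely many coordinates, so it is clopen, in particular closed. A point in $\bigcap C_{x,y}$ is exactly a $G$-map $P\to Q_nG$.

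Finally verify the finite intersection property. Take finitely many pairs $(x_1,y_1),\dots,(x_k,y_k)$ and let $F$ be the union of the $G$-orbits of all $x_j,y_j$. This is a finite $G$-subposet, since $G$ is finite and $F$ carries the induced order. By hypothesis there is a $G$-map $F\to Q_nG$. It gives values on the finitely many orbits in $F$; extending arbitrarily on the other orbits gives a point of $\bigcap_j C_{x_j,y_j}$. Compactness then gives a global $G$-map, so $\xind P\le n$.

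The only delicate step is the reduction to orbit representatives together with the closedness of the constraint sets. This is where finiteness of $G$ is used: it keeps each factor finite, so Tychonoff applies, and it keeps the generated subposet $F$ finite. No real obstacle is expected beyond stating these points carefully.
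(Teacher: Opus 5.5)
Your proof is correct and is essentially the paper's Tychonoff compactness argument. The only differences are cosmetic. You build equivariance into the configuration space by parametrizing over orbit representatives, and you index the closed constraint sets by comparable pairs. The paper instead works in all of $Q_nG^{\,P}$, uses closed sets $M_F$ indexed by finite $G$-subposets, and argues by contrapositive.
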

\begin{proof}
Evidently the right-hand side is at most $\xind P$ from monotonicity of the cross-index. So it remains to show that whenever $\xind P>n$ there exists a finite $G$-subposet $F\subseteq P$ such that $\xind F> n$.

For each finite \(G\)-subposet \(F\subseteq P\), let
\[
M_F:=\{f:P\to Q_nG : f|_F \text{ is a monotone \(G\)-map}\}.
\]
Consider the set \({Q_nG}^{\,P}\) of all maps \(P\to Q_nG\), endowed with the product topology, where \(Q_nG\) carries the discrete topology. Since \(Q_nG\) is finite, it is compact, and thus \(Q_nG^{\,P}\) is compact by Tikhonov's theorem. Moreover, each \(M_F\) is a closed subset of \(Q_nG^{\,P}\), as the requirement that \(f|_F\) be \(G\)-equivariant and order-preserving is determined by finitely many coordinate conditions. Hence \(M_F\) is closed.

The set of all monotone $G$-maps $f: P\to Q_nG$ evidently equals the intersection $\bigcap_{\text{finite}\ F\subseteq P} M_F$. If $\xind P>n$ then this set is empty, and from compactness some finite intersection
\[
M_{F_1}\cap \dots \cap M_{F_N}
\]
is also empty. This implies $\xind F_1\cup\dots\cup F_N > n$, since otherwise any extension of a monotone $G$-map $f : F_1\cup\dots\cup F_N\to Q_nG$ would lie in this intersection. So we have a finite $G$-subposet $F_1\cup\dots\cup F_N$ with cross-index greater than $n$.
\end{proof}

\subsection{Borsuk--Ulam type theorem for cross-index}

\label{Subsection: Borsuk}
The generalized Borsuk--Ulam theorem states that there is no $G$-equivariant continuous map from $E_nG$ to $E_mG$ for $n > m$. In other words, $\ind E_nG = n$. Although the original proof of this theorem relies on sophisticated topological machinery, the analogous result for the cross-index, $\xind Q_nG = n$ for all $n \ge 0$, follows from a simple combinatorial argument. This also follows directly from the generalized Borsuk–Ulam theorem. To see this, observe that the existence of an order-preserving $G$-map from $Q_nG$ to $Q_mG$ for $n > m$ would induce a $G$-equivariant map from $\Delta Q_nG \cong_{G} E_nG$\footnote{In this paper, the notation $\cong_{G}$ denotes isomorphism either in the category of $G$-equivariant topological spaces or in the category of $G$-posets.} to $\Delta  Q_mG \cong_{G} E_mG$, contradicting the generalized Borsuk--Ulam theorem. 

To avoid topological methods as in the above argument, we provide a simple, purely combinatorial argument:

\begin{proposition}\label{prop:xind}
For any $n \ge 0$, we have $\xind Q_nG = n$.
\end{proposition}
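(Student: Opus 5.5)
The upper bound $\xind Q_nG \le n$ is immediate: the identity map $Q_nG\to Q_nG$ is a $G$-map. (Alternatively, apply Proposition~\ref{prop:dimension_bound}, since the height of $Q_nG$ is $n$.) The content of the statement is the lower bound. We must show there is no $G$-map $\psi\colon Q_nG\to Q_mG$ with $m<n$. We argue by induction on $n$, or equivalently by a direct counting of values along a carefully chosen chain.

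The key observation concerns two distinct elements $g\neq h$ of $G$. The elements $(g,i)$ and $(h,i)$ lie in the same level $i$, and both are below every element of level $i+1$. Suppose $\psi$ is a $G$-map and $\psi(g,i)=(a,k)$. By equivariance, $\psi(h,i)=(hg^{-1}a,k)$, so the two images have the same value $k$ but distinct signs, because the action on $Q_mG$ is free. Now take any $x$ at level $i+1$ with $\psi(x)=(c,l)$. Order preservation gives $(a,k)\preceq(c,l)$ and $(hg^{-1}a,k)\preceq(c,l)$. If $l=k$, both relations would have to be equalities, forcing $a=c=hg^{-1}a$, a contradiction. Hence $l>k$.

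So the value of $\psi$, which is constant on each level by equivariance, strictly increases from level $i$ to level $i+1$. Starting from level $0$ with value at least $0$, level $n$ therefore has value at least $n$. This forces $m\ge n$, and so $\xind Q_nG\ge n$.

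The only point needing care is the step where two distinct orbit elements at one level share a common upper bound with which they must be comparable. This is exactly where freeness of the action on $Q_mG$ and $|G|\ge 2$ are used. I expect that to be the main (mild) obstacle; everything else is bookkeeping along the levels.
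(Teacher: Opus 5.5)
Your proof is correct and is essentially the paper's argument. Both rest on two facts: distinct points of one $G$-orbit in $Q_mG$ share a value and are incomparable, while every element of a lower level of $Q_nG$ lies below every element of a higher level. You package this as a strict increase of the value from each level to the next, whereas the paper applies the pigeonhole principle to the chain $(e,0)\prec\cdots\prec(e,n)$ and derives a contradiction from $(g,a)\succ(e,b)$ for $g\neq e$.
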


\begin{proof}
Suppose, by contradiction, that there exists a $G$-map $\psi \colon Q_nG \to Q_mG$ for some $n > m$. Consider the chain in $Q_nG$:
\[
(e, 0) \preceq (e, 1) \preceq\cdots \preceq (e, n).
\]
Since $\psi$ is order-preserving, we obtain the corresponding chain in $Q_mG$:
\[
\psi((e, 0)) \preceq \psi((e, 1)) \preceq \cdots \preceq \psi((e, n)).
\]
By the pigeonhole principle (as $n > m$), there exist indices $a > b$ in $\{1, \ldots, n+1\}$ such that $\psi((e, a)) = \psi((e, b))$. Now, take any non-trivial element $g \in G$ with $g \neq e$. We have:
\[
\psi((g, a)) = g \cdot \psi((e, a)) = g \cdot \psi((e, b)).
\]
However, in $Q_nG$, we have the relation $(g, a) \succ (e, b)$, while in $Q_mG$, the elements $\psi((g, a)) = g \cdot \psi((e, b))$ and $\psi((e, b))$ are incomparable. This contradicts the assumption that $\psi$ is order-preserving.
\end{proof}
However, we do not claim that the topological version can be deduced directly from this simple combinatorial analogue. For a detailed comparison of the complexity between the Borsuk--Ulam theorem and its various topological and combinatorial counterparts, we refer the reader to the recent survey~\cite{daneshpajouh2025box}.

\section{Proofs of the Main Results}

\subsection{Joins and comparison between topological index and cross-index}

\label{Subsection: Join}
The topological index is sub-additive under the join operation, i.e., if \( X \) and \( Y \) are two topological \( G \)-spaces, then we have
\begin{equation}\label{eq: join}
    \ind X \ast Y \le \ind X + \ind Y + 1.
\end{equation}

This follows from the simple observation that given \( G \)-maps \( X \to E_nG \) and \( Y \to E_mG \), they induce a \( G \)-map (join of maps) \( X \ast Y \to E_nG \ast E_mG = E_{n+m+1}G \). However, this inequality can be strict (see~\cite{csorba2007homotopy}). Nevertheless, we show that equality always holds for the cross-index.  To proceed, we first recall the definition of the join of posets.

\begin{definition}
The join of two disjoint posets $P$ and $Q$, denoted $P \ast Q$, is the poset whose underlying set is the disjoint union $P \cup Q$ and whose order relation preserves the original orders within $P$ and within $Q$, while additionally declaring that every element of $P$ is less than every element of $Q$. If $P$ and $Q$ are $G$-posets, then $P \ast Q$ inherits a natural $G$-action from $P$ and $Q$, making it a $G$-poset.    
\end{definition}

\begin{proposition}\label{prop: join}
For free $G$-posets $P$ and $Q$ we have
$$\xind(P\ast Q)= \xind P+ \xind Q +1.$$
\end{proposition}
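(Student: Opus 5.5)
The upper bound is a direct construction. Let $\psi_P\colon P\to Q_nG$ and $\psi_Q\colon Q\to Q_mG$ be $G$-maps with $n=\xind P$ and $m=\xind Q$. Define $\psi\colon P\ast Q\to Q_{n+m+1}G$ by
\[
\psi(p)=\psi_P(p)\ \ (p\in P),\qquad \psi(q)=(g,\,j+n+1)\ \text{ whenever } \psi_Q(q)=(g,j),\ q\in Q.
\]
This map is equivariant because $\psi_P$ and $\psi_Q$ are. It is order-preserving inside $P$ and inside $Q$, since the shift by $n+1$ preserves the order of $Q_mG$. For $p\in P$ and $q\in Q$, the value of $\psi(p)$ is at most $n$ and the value of $\psi(q)$ is at least $n+1$, so $\psi(p)\prec\psi(q)$. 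Hence $\xind(P\ast Q)\le n+m+1$.

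For the lower bound, suppose $\psi\colon P\ast Q\to Q_kG$ is a $G$-map. The aim is to show $k\ge n+m+1$.

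\textbf{Step 1: the values on $Q$ separate from the values on $P$.} Let $a$ be the maximum value of $\psi$ on $P$, attained at some $p_0$ with $\psi(p_0)=(h,a)$. Take any $q\in Q$ and write $\psi(q)=(g,b)$. Freeness gives $h'\neq h$ in $G$ with $h'h^{-1}p_0\ne p_0$. Both $p_0$ and $h'h^{-1}p_0$ lie below $q$, and their images $(h,a)$ and $(h',a)$ must both be $\preceq(g,b)$. Two elements of $Q_kG$ with different signs lie below $(g,b)$ only if the inequality is strict for at least one of them, and in fact for both: at value $a$ one of the two signs differs from $g$, which forces $b>a$. So every value of $\psi$ on $Q$ exceeds every value on $P$.

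\textbf{Step 2: reduce to the two pieces.} Restricting $\psi$ to $P$ gives a $G$-map into $Q_aG$, so $a\ge n$. On $Q$, the values lie in $\{a+1,\dots,k\}$, and subtracting $a+1$ yields a $G$-map $Q\to Q_{k-a-1}G$. Thus $k-a-1\ge m$, and so $k\ge a+m+1\ge n+m+1$.

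The main point needing care is Step 1. There, freeness is what supplies two elements of $P$ with the same value but distinct signs. Without it a single fixed element could sit at the same level as elements of $Q$, and the separation would fail. Everything else is bookkeeping with shifted values. I would also note that $P$ and $Q$ are nonempty, as is implicit in the statement; if $P$ were empty the formula would have to read with the convention $\xind\emptyset=-1$.
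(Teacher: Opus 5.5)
Your proof is correct and takes essentially the paper's route: the upper bound is the same stacking of $Q_{\xind Q}G$ above $Q_{\xind P}G$. For the lower bound, the paper likewise shows that every value on $Q$ exceeds every value on $P$ (if $\psi(p)=\psi(q)$ then $gp\prec q$ forces $g\psi(q)\preceq\psi(q)$, the same ``two signs at one level below $\psi(q)$'' obstruction you use) before restricting to $P$ and $Q$ as in your Step~2. One small correction to your closing remark: Step~1 does not actually use freeness, since $\psi(h'h^{-1}p_0)=(h',a)$ holds by equivariance whether or not $h'h^{-1}p_0=p_0$, and any $G$-poset admitting a $G$-map to $Q_kG$ is automatically free; only $|G|\ge 2$ is needed.
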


\begin{proof}
Let $\xind P = m$ and $\xind Q = n$. So there are $G$-maps from $P$ and $Q$ to $Q_mG$ and $Q_nG$ respectively. These maps induce a $G$-map from $P\ast Q$ to $Q_m\ast Q_n\cong_{G} Q_{n+m+1}$. Thus $\xind P\ast Q\le \xind P+ \xind Q +1$.  Conversely, suppose that $\xind(P\ast Q) < \xind P+ \xind Q +1$. So there are $t\le m+n$  and a $G$-map
$\psi : P\ast Q\to Q_t$. By the definition of the join and $\psi $, for each $p\in P$ and  $q\in Q$ we have $\psi (p) \prec \psi(q)$. Indeed, $\psi (p) \preceq \psi(q)$ as $\psi$ is order preserving and $p\prec q$. Moreover, $\psi (p) \neq \psi(q)$, since otherwise from one hand we had
$\psi(gp)=g\psi(p)=g\psi(q)$ for every $g$. On the other hand, the inequality $gp \prec q$ (as every element of $P$ is strictly less than every element of Q in $P\ast Q$) and the fact that $\psi$ is an order preserving map, would imply that $g\psi(q)\preceq \psi(q)$. This is a contradiction as such elements are not comparable in $Q_tG$ when $g\ne e$. Thus,  $\psi (p) \prec \psi(q)$. Now, by restriction $\psi$ to $P$ and $Q$ we can find $G$-maps from $P$ and $Q$ to $Q_a$ and $Q_b$ respectively for some $a,b$ such that $a+b\le t-1$. Thus $a+b < m+n$, which implies $a < m$ or $b < n$. This contradicts our assumption.
\end{proof}

We are now prepared to prove Theorem~\ref{thm:large-gap}. As a first step, we construct a $G$-poset with cross-index 2 and topological index 1, as described in the following example.
\begin{example}\label{ex:21}
Let $G$ be an arbitrary group with at least two elements, and $g_*$ be a non-trivial element of $G$, i.e., $g_*\neq e$. Let  $P_G$ be the $G$-poset with $6|G|$ elements whose elements are
\[P_G=\{(g, a_i) : 1\le i\le 6, g\in G \}\]
and whose ordering is given by
\begin{itemize}
    \item $(g, a_i) \preceq  (g, a_{i+2})$ for every $g\in G, i=1,2$,
    \item $(g, a_1) \preceq  (gg_{*}, a_{4})$ for every $g\in G$,
    \item $(g, a_2) \preceq  (g, a_{3})$ for every $g\in G$,
    \item $(g, a_i) \preceq  (g', a_{i+2})$ for every $g, g'\in G, i=3,4$.
\end{itemize}
The group $G$ acts naturally on $P$ by acting on the first component, i.e., $h\cdot(g, a_i)=(hg, a_i)$.
\end{example}
This example is inspired by a similar construction in \cite[Theorem 9]{Sim}, which treats the case of $\mathbb{Z}_2$. 
We illustrate the construction for the case $G=\mathbb{Z}_2=\{\pm 1\}$. In this case, there is only one choice for $g_{*}$, i.e., $g_{*}=-1$, and $P_{\mathbb{Z}_2}$ is the $\mathbb{Z}_2$-poset whose Hasse diagram is depicted in Fig. \ref{fig:ex-with-z2}. 

\begin{figure}[h]
\centering
\begin{tikzpicture}[
    node distance=1.5cm,  
    every node/.style={
        draw, 
        circle, 
        minimum size=5mm,  
        inner sep=1pt,    
        font=\small        
    }
]
    \node (a1) {$(+,a_1)$};
    \node[right=of a1] (a2) {$(+,a_2)$};
    \node[right=of a2] (a3) {$(-,a_1)$};
    \node[right=of a3] (a4) {$(-,a_2)$};
    
    \node[above=of a1] (b1) {$(+,a_3)$};
    \node[right=of b1] (b2) {$(+,a_4)$};
    \node[right=of b2] (b3) {$(-,a_3)$};
    \node[right=of b3] (b4) {$(-,a_4)$};
    
    \node[above=of b1] (c1) {$(+,a_5)$};
    \node[right=of c1] (c2) {$(+,a_6)$};
    \node[right=of c2] (c3) {$(-,a_5)$};
    \node[right=of c3] (c4) {$(-,a_6)$};
    
    \draw[->, line width=1.2pt] (a1) -- (b1);
    \draw[->, line width=1.2pt] (a1) -- (b4);
    \draw[->, line width=1.2pt] (a2) -- (b2);
    \draw[->, line width=1.2pt] (a2) -- (b1);
    \draw[->, line width=1.2pt] (a3) -- (b3);
    \draw[->, line width=1.2pt] (a3) -- (b2);
    \draw[->, line width=1.2pt] (a4) -- (b3);
    \draw[->, line width=1.2pt] (a4) -- (b4);
    
    \draw[->, line width=1.2pt] (b1) -- (c1);
    \draw[->, line width=1.2pt] (b1) -- (c3);
    \draw[->, line width=1.2pt] (b2) -- (c2);
    \draw[->, line width=1.2pt] (b2) -- (c4);
    \draw[->, line width=1.2pt] (b3) -- (c1);
    \draw[->, line width=1.2pt] (b3) -- (c3);
    \draw[->, line width=1.2pt] (b4) -- (c4);
    \draw[->, line width=1.2pt] (b4) -- (c2);
\end{tikzpicture}
\caption{Hasse diagram of $P_{\mathbb{Z}_2}$}
\label{fig:ex-with-z2}
\end{figure}
The order complex $\Delta P_{\mathbb{Z}_2}$ of $P_{\mathbb{Z}_2}$ is depicted in Fig. \ref{fig:order-complex}.

\begin{figure}[h!]
    \centering
    \includegraphics[scale=0.50]{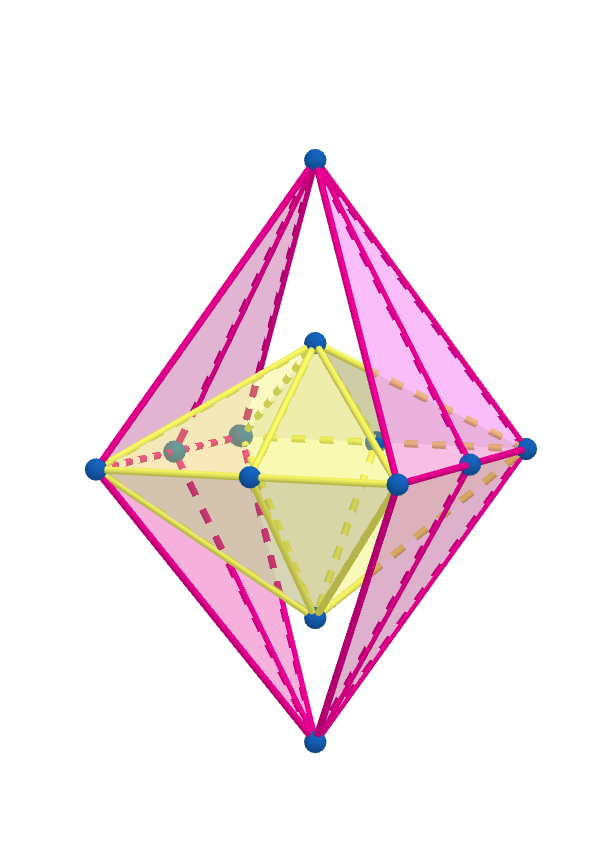}
    \\
    \centering
    \includegraphics[width=.7\textwidth]{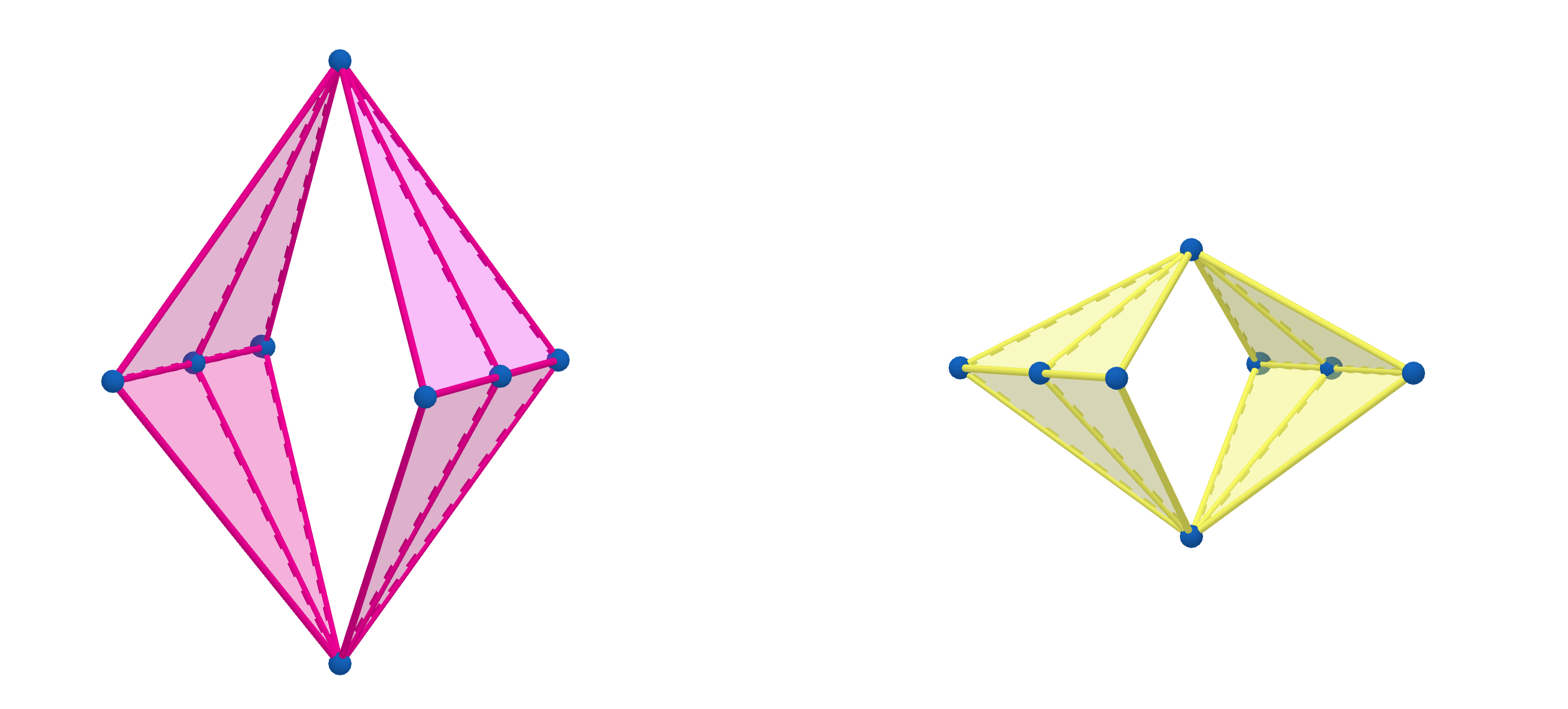}
    \caption{The order complex of $P_{\mathbb{Z}_2}$ appears at the top. It is the union of two subcomplexes (shown in pink and yellow). After rotating the yellow subcomplex by about \(90^\circ\) around the \(z\)-axis and placing it appropriately, the two subcomplexes assemble into the full order complex.}
    \label{fig:order-complex}
\end{figure}

It is $\mathbb{Z}_2$-homotopy equivalent to the $1$-dimensional $\mathbb{Z}_2$-space in Fig. \ref{fig:eq-1-dim} and therefore its topological-index is one.

\begin{figure}[h!]
    \centering
    \includegraphics[scale=0.50]{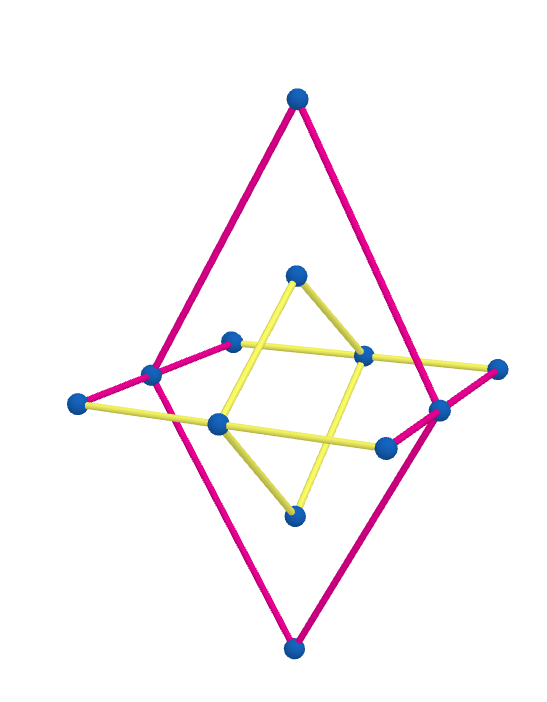}
    \caption{$\mathbb{Z}_2$-Homotopy equivalence of $P_{\mathbb{Z}_2}$ to the 1-dimensional space shown.}
    \label{fig:eq-1-dim}
\end{figure}

Now, we prove the cross and topological indices of Example \ref{ex:21} in the following next two lemmas.
\begin{lemma}\label{lem: Xind P_G=2}
    $\xind P_G = 2$.
\end{lemma}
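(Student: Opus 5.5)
The upper bound comes from the height bound, and the lower bound from a value-forcing argument that rules out any $G$-map into $Q_1G$.

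\emph{Upper bound.} Every chain in $P_G$ climbs through the three levels $\{a_1,a_2\}$, $\{a_3,a_4\}$, $\{a_5,a_6\}$ at most once. Hence $h(P_G)=2$. The action is free, so Proposition~\ref{prop:dimension_bound} gives $\xind P_G\le 2$.

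\emph{Lower bound.} Suppose, for contradiction, that $\psi\colon P_G\to Q_1G$ is a $G$-map. (A map to $Q_0G$ would also be a map to $Q_1G$, so this covers both cases.) The argument rests on one observation.

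\emph{Key observation.} Let $x\preceq g'y$ for every $g'\in G$. Then the value of $\psi(x)$ is strictly smaller than the value of $\psi(y)$.

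To see this, write $\psi(x)=(h,i)$ and $\psi(y)=(k,j)$. By monotonicity, the value $i$ is at most the value $j$ of each $\psi(g'y)=(g'k,j)$. Suppose $i=j$. Then comparability forces $(h,i)=(g'k,i)$, that is, $h=g'k$, for every $g'$. This is impossible because $|G|\ge 2$.

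\emph{Applying the observation.} The relations $(g,a_3)\preceq(g',a_5)$ and $(g,a_4)\preceq(g',a_6)$ hold for all $g,g'$. In $Q_1G$ this forces every $\psi(g,a_3)$ and $\psi(g,a_4)$ to have value $0$.

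Each element $(g,a_1)$ and $(g,a_2)$ lies below some element of level $a_3$ or $a_4$. So its image also has value $0$. Two comparable elements of value $0$ in $Q_1G$ must coincide, so every cover relation between the first two levels becomes an equality of images. Explicitly:
\[
\psi(g,a_3)=\psi(g,a_1)=\psi(gg_*,a_4), \qquad \psi(g,a_4)=\psi(g,a_2)=\psi(g,a_3).
\]
Taking $g=e$ gives $\psi(e,a_4)=\psi(e,a_3)=\psi(g_*,a_4)=g_*\psi(e,a_4)$. This contradicts the freeness of the action on $Q_1G$, since $g_*\neq e$. Hence no $G$-map $P_G\to Q_1G$ exists, so $\xind P_G\ge 2$.

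The only step needing care is the key observation. It is where the "complete" relations $(g,a_i)\preceq(g',a_{i+2})$ for $i=3,4$ are used, and it is what pins the middle level to value $0$. After that, the twist by $g_*$ in the first level closes the contradiction directly.
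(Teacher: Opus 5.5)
Your proof is correct and is essentially the paper's argument. The upper bound comes from the height bound. The lower bound uses the same two ingredients: the all-to-all relations $(g,a_i)\prec(g',a_{i+2})$ for $i=3,4$ force a strict jump in value, and the $g_*$-twisted zigzag through the first two levels joins two points of one orbit. You run it contrapositively, pinning the lower two levels to value $0$ in $Q_1G$ and then closing the zigzag explicitly. This makes your version self-contained, whereas the paper cites the path criterion for $\xind P_1\ge 1$ and leaves implicit why some maximal element of $P_1$ must receive value at least $1$.
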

\begin{proof}
The inequality $\xind P_G\le 2$ comes from the fact that the cross-index of a $G$-poset is always bounded from above by its height, see Proposition \ref{prop:dimension_bound}. For the other side, assume
$\psi: P_G\to Q_nG$ is a $G$-map. We must show that $n\ge 2$. Consider the following $G$-subposet of $P_G$:
$$P_1=\{(g, a_i) : 1\le i\le 4, g\in G\}.$$
The cross-index of $P_1$ cannot be zero as there is a path between two elements from the same orbit in the comparability graph of $P_1$ \cite[Proposition 2]{bui2023topological}. That is,

$$(g_{*}, a_1) \prec (g_{*}, a_3) \succ (g_{*}, a_2) \prec (g_{*}, a_4) \succ (e, a_1).$$
Thus, $\xind P_1\ge 1$, and therefore one of the maximal elements of $P_1$, say for simplicity $(e, a_3)$, must be labeled with $(g, m)$ for some $g\in G$ where $m\ge 1$: $\psi((e, a_3))= (g,m)$. But, we have $(e, a_3) \prec (e, a_5)$ and $(e, a_3) \prec (g_{*}, a_5)$, which implies $\psi ((e, a_3)) \preceq \psi ((e, a_5))$ and $\psi ((e, a_3)) \preceq \psi ((g_{*}, a_5))=g_{*}\psi ((e, a_5))$. Thus, the value of $\psi ((e, a_3))$ (without considering its sign) must be strictly less than the value of $\psi ((e, a_5))$ as for any $1\le i\le n+1$ the elements $(g, i)$ and $(g',i)$ are not comparable in $Q_nG$ where $g\neq g'$. Therefore, $\xind P_G\ge 2$, and therefore combining this result with the previous part implies $\xind P_G=2$. 
\end{proof}

\begin{lemma}\label{lem: ind P_G=1}
$\ind \Delta P_G = 1$.
\end{lemma}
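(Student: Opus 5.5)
Since the action on $\Delta P_G$ is free, we have $\ind \Delta P_G \ge 1$ as soon as $\Delta P_G$ is connected. The work is the upper bound: we need a $G$-map $\Delta P_G \to E_1G$.

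\textbf{Lower bound.} An index-zero space admits a $G$-map to the discrete space $G$. Such a map is constant on path components, so no component can contain two distinct points of the same orbit. The path
\[
(g_*, a_1) \prec (g_*, a_3) \succ (g_*, a_2) \prec (g_*, a_4) \succ (e, a_1),
\]
already used in Lemma~\ref{lem: Xind P_G=2}, lies in the $1$-skeleton of $\Delta P_G$ and joins $(g_*,a_1)$ to $(e,a_1)$. Hence $\ind \Delta P_G \ge 1$.

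\textbf{Upper bound: strategy.} I would build the map in two stages.
\begin{itemize}
\item[(i)] On the subposet $P_1=\{(g,a_i): i\le 4\}$, the order complex has dimension $1$: it is a $G$-graph. Hence Corollary~\ref{cor:top_dim_bound} gives a $G$-map $f\colon \Delta P_1 \to E_1G$.
\item[(ii)] Extend $f$ over the simplices containing the top elements $(g,a_5)$ and $(g,a_6)$.
\end{itemize}

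\textbf{Upper bound: the extension.} The element $(g,a_5)$ lies above $(g,a_3)$ and every $(g',a_3)$. Likewise $(g,a_6)$ lies above all $(g',a_4)$. So the downward link of $(e,a_5)$ in $\Delta P_G$ is the subcomplex $L_3=\Delta\bigl(\{(g',a_i)\}_{g'\in G,\, i\in\{1,2,3\}}\bigr)$, and $\operatorname{st}(e,a_5)$ is the cone on $L_3$. The same holds for $a_6$ with $L_4$.

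Since the cones over distinct orbit representatives $(e,a_5)$ and $(e,a_6)$ are translated freely by $G$, it suffices to extend $f|_{L_3}$ to the cone over $L_3$, and similarly for $L_4$, and then spread these extensions equivariantly. No equivariance is needed on the cone point itself, because its stabilizer is trivial. So we only need $f|_{L_3}$ and $f|_{L_4}$ to be null-homotopic in $E_1G$, a connected graph.

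\textbf{Making the restrictions null-homotopic.} $L_3$ is the graph on the $a_1,a_2,a_3$ levels with edges $(g,a_1)(g,a_3)$ and $(g,a_2)(g,a_3)$. It is a disjoint union of paths, hence contractible componentwise, so any map is null-homotopic.

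For $L_4$, the edges are $(g,a_2)(g,a_4)$ and $(g,a_1)(gg_*,a_4)$, and again $L_4$ is a union of paths. However, $L_3\cap L_4$ contains the $a_1$ and $a_2$ vertices, so the two null-homotopies are built independently on different cones. This is fine: the cones meet only along $\Delta P_1$, where $f$ is already fixed.

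\textbf{Expected obstacle.} The delicate point is whether the stars of different top elements overlap in higher simplices involving two top elements. They do not, since $a_5$ and $a_6$ elements are pairwise incomparable. The full complex is therefore $\Delta P_1$ with cones attached along $L_3$ and $L_4$ and their translates, so the extension argument goes through and gives $\ind \Delta P_G\le 1$.
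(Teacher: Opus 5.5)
Your argument is correct, but it follows a different route from the paper's. The paper proves the upper bound by a deformation. It checks that for every chain $x\prec y\prec z$ the open interval $(x,z)$ is exactly $\{y\}$, so each long edge $\{x,z\}$ is a free face of a unique triangle. Collapsing all triangles through these edges, orbit by orbit, equivariantly retracts $\Delta P_G$ onto a $1$-dimensional $G$-subcomplex, and the dimension bound plus homotopy invariance of the index finish the job. For the lower bound the paper simply cites that index zero is equivalent to cross-index zero.

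You instead construct the $G$-map $\Delta P_G\to E_1G$ by extension. You map the graph $\Delta P_1$ first, then extend over the closed stars of the representatives $(e,a_5)$ and $(e,a_6)$. This works because their links (the order complexes of their strict down-sets) are disjoint unions of paths and $E_1G$ is connected. You then spread the extension by freeness, noting correctly that stars of distinct top elements meet only in $\Delta P_1$.

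Both arguments use the same combinatorial input seen from two sides. In the link of a top element $z$, a bottom vertex $x$ has degree $|(x,z)|$, so the paper's singleton condition says precisely that these links are disjoint unions of stars.

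The paper's route gives more: an equivariant homotopy equivalence of $\Delta P_G$ with a graph. Yours produces the required map directly and avoids collapses. It only needs the restriction of $f$ to each top link to be null-homotopic, which holds whenever the links are forests. So it would also cover height-$2$ examples in which some interval $(x,z)$ has several elements and the long edges are no longer free. Your explicit orbit-connecting path also makes the lower bound self-contained rather than dependent on the cited equivalence.
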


\begin{proof}
Observe that every triangle in $\Delta P$ has a free edge. Indeed, for a triangle $(g_1, a_{i_1}) \prec (g_2, a_{i_2}) \prec (g_3, a_{i_3})$, the edge $\{(g_1, a_{i_1}), (g_3, a_{i_3})\}$ is free. Equivalently, every 2-simplex of $\Delta P$ contains a ``long edge'' (joining its minimal and maximal vertices) that lies in exactly one triangle. To see this, note that the elements of $P$ split into three levels:
\begin{itemize}
    \item \(L_1 = \{(g, a_1), (g, a_2):\, g\in G\,\}\), 
    \item \(L_2 = \{(g, a_3), (g, a_4):\, g\in G\,\}\), 
    \item \(L_3 = \{(g, a_5), (g, a_6):\, g\in G\,\}\).
\end{itemize}

The only nontrivial comparabilities are:
\begin{align*}
& (g, a_1) \prec (g, a_3), \quad (g, a_1) \prec (gg_*, a_4);\\
& (g, a_2) \prec (g, a_3), \quad (g, a_2) \prec (g, a_4);
\end{align*}
and for all $h$,
\[
(g, a_3) \prec (h, a_5), \qquad (g, a_4) \prec (h, a_6).
\]
In particular, there is no relation between level-2 and the ``wrong'' top:
\[
(g, a_3) \not\prec (h, a_6), \qquad (g, a_4) \not\prec (h, a_5),
\]
and all elements in the same level are incomparable as well.

A 2-simplex in \(\Delta P\) is a chain \(x\prec y\prec z\) with \(x\in L_1\), \(y\in L_2\), \(z\in L_3\).
The only possibilities are:
\begin{itemize}
    \item [(i)] \((g, a_1) \prec (g, a_3) \prec (h, a_5)\);
    \item [(ii)] \((g, a_2) \prec (g, a_3) \prec (h, a_5)\);
    \item [(iii)] \((g, a_1) \prec (gg_*, a_4) \prec (h, a_6)\);
    \item [(iv)] \((g, a_2) \prec (g, a_4) \prec (h, a_6)\).
\end{itemize}

In any poset, an edge \(\{x,z\}\) in the order complex lies in exactly \(|(x,z)|\) many 2-simplices, where \((x,z)\) denotes the open interval \(\{y : x\prec y\prec z\}\). Now compute these open intervals for the four types above. In each case, any \(y\) with \(x\prec y\prec z\) must lie in \(L_2\).
From the comparabilities: above \((g, a_1)\) in \(L_2\) are exactly \((g, a_3)\) and \((gg_*, a_4)\), but among these only \((g, a_3)\) lies below any \((h, a_5)\) and only \((gg_*, a_4)\) lies below any \((h, a_6)\); above \((g, a_2)\) in \(L_2\) are exactly \((g, a_3)\) and \((g, a_4)\), but among these only \((g, a_3)\) lies below any \((h, a_5)\) and only \((g, a_4)\) lies below any \((h, a_6)\).
Therefore, in each of (i)–(iv) we have \((x,z) = \{y\}\); the open interval is a singleton. 

Consequently, for every triangle \(x\prec y \prec z\), the long edge \(\{x,z\}\) is contained in exactly one 2‑simplex, hence is a free edge. Thus every triangle in \(\Delta P\) has a free edge. Therefore, we can equivariantly deform $\Delta P$ to a one dimensional equivariant subspace. Thus, $\ind \Delta P\le 1$ as the topological index is always bounded above by its dimension and it is preserved under the equivariant homotopy. Indeed, the topological index is exactly one as it is known that the topological index is zero if and only if the cross-index is zero~\cite[Proposition 1]{bui2023topological}.
\end{proof}

\begin{proof}[Proof of Theorem \ref{thm:large-gap}]
Let $P_G$ be the $G$-poset in Example \ref{ex:21}. By Lemma \ref{lem: ind P_G=1}, we have $\ind \Delta P_G = 1$ and by Lemma~\ref{lem: Xind P_G=2} $\xind P_G=2$. It follows that the cross-index of the poset $Q={P_G}^{\ast n}$ is $3n-1$ while its topological-index is at most $2n-1$. Indeed, by the additivity of the cross-index on the join operation, Proposition~\ref{prop: join}, we have
$$\xind Q= n\xind P+(n-1)=2n+n-1=3n-1.$$
On the other hand by the sub-additivity of the topological-index on the join operation, Inequality~\eqref{eq: join}, we get
$$\ind \Delta Q=\ind \underbrace{\Delta P_G\ast\cdots\ast\Delta P_G}_{n}\le n\ind \Delta P_G+n-1=n+n-1=2n-1.$$
For the lower bound, since $\Delta P_G$ is connected, there is a $G$-equivariant map $\psi \colon E_1G \to \Delta P_G$. to see this, first fix $|G| + 1$ distinct elements $\{x_e\} \cup \{y_g\}_{g \in G}$ in $P_G$. For each $g \in G$, let $\phi_{e,g}$ be a path in $\Delta P$ connecting $x_e$ to $y_g$. The map $\psi$ is then defined by $\psi([g, h, t]) = g\phi_{e,g^{-1}h}(t)$ for all $g,h \in G$ and $t \in [0,1]$ is the desired equivarian map. This construction extends to a $G$-equivariant map from 
$$E_{2n-1}G = \underbrace{(G \ast G)\ast\cdots\ast(G\ast G)}_{n}\to \underbrace{\Delta P\ast\cdots \ast\Delta P}_{n} \cong_{G}\Delta Q.$$

Finally, monotonicity of the topological index yields:
    \[
    2n-1 = \ind E_{2n-1}G \le \ind \Delta Q.
    \]
    Thus, we have $\ind \Delta Q = 2n-1$ which completes the proof of our claim.
\end{proof}

\begin{remark}
    Later, in Remark~\ref{remark: final}, we will present a distinct example when the group has at least three elements.
\end{remark}

It is now natural to pose the following question.

\begin{question}
\label{que: que3}
Let \( n > 1 \) be a positive integer. Does there exist a connected\footnote{A poset is called connected if its order complex is.} finite free \( G \)-poset \( P \) such that
\[
\ind \Delta P = 1 \quad \text{but} \quad \xind P = n?
\]
\end{question}

An affirmative answer to Question~\ref{que: que3} for all $n \ge 2$ would provide a complete solution to Question~\ref{que: que2}.
\begin{proposition}
    Let $m,n\in \mathbb N$ so that $2 \le m < n$. Suppose that there exists a connected $G$-poset $P$ satisfying that $\xind P = n - m + 1$ and $\ind\Delta P = 1$. Then $Q = P * Q_{m-2}G$ satisfies $\ind\Delta Q = m$ and $\xind Q = n$.
\end{proposition}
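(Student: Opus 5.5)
The cross-index part follows from the join additivity of Proposition~\ref{prop: join}, and the topological index part from a sandwich argument like the one in the proof of Theorem~\ref{thm:large-gap}.

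\emph{Cross-index.} Since $\xind Q_{m-2}G = m-2$ by Proposition~\ref{prop:xind}, Proposition~\ref{prop: join} gives
\[
\xind Q = \xind P + \xind Q_{m-2}G + 1 = (n-m+1)+(m-2)+1 = n.
\]
Here $m\ge 2$ is needed so that $Q_{m-2}G$ is defined.

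\emph{Upper bound on the topological index.} The order complex of a join of posets is the join of the order complexes, so $\Delta Q \cong_G \Delta P \ast \Delta Q_{m-2}G \cong_G \Delta P \ast E_{m-2}G$. Inequality~\eqref{eq: join} then yields
\[
\ind \Delta Q \le \ind\Delta P + \ind E_{m-2}G + 1 = 1 + (m-2) + 1 = m.
\]
Here $\ind E_{m-2}G = m-2$ is the generalized Borsuk--Ulam theorem; only the upper bound is actually needed, and that is trivial.

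\emph{Lower bound on the topological index.} This is the step needing care, and the plan is to reuse the construction from the proof of Theorem~\ref{thm:large-gap}. Because $\Delta P$ is connected, there is a $G$-map $E_1G = G\ast G \to \Delta P$, obtained by choosing paths from a fixed point $x_e$ to points $y_g$ and extending equivariantly. Joining this map with the identity of $E_{m-2}G$ gives a $G$-map
\[
E_mG = E_1G \ast E_{m-2}G \to \Delta P \ast E_{m-2}G \cong_G \Delta Q.
\]
Monotonicity of the index then gives $m = \ind E_mG \le \ind \Delta Q$.

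The main point to check is that the path construction really is well defined and equivariant on $G\ast G$, with $[g,h,t]\mapsto g\,\phi_{e,g^{-1}h}(t)$. At $t=0$ this gives $g x_e$, and at $t=1$ it gives $g y_{g^{-1}h}$. For this to depend only on $h$, we need $y_{g'} = g' y_e$. So I would choose a single point $y$ and set $y_g = g y$, and also take $x$ and $y$ in distinct orbits or otherwise handle the identifications consistently; connectivity ensures the required paths exist. Combining the two bounds gives $\ind\Delta Q = m$.
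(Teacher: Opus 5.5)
Your proof is correct and follows the paper's argument essentially verbatim. You use join additivity of the cross-index to get $\xind Q = n$, subadditivity of the topological index under joins for the upper bound $\ind\Delta Q\le m$, and the $G$-map $E_mG = (G\ast G)\ast E_{m-2}G \to \Delta P \ast \Delta Q_{m-2}G$, obtained from connectivity of $\Delta P$, for the lower bound.

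Your remark that the path construction $[g,h,t]\mapsto g\,\phi_{e,g^{-1}h}(t)$ is well defined only when the endpoints are chosen as $y_g = g\,y_e$ is a correct detail that the paper leaves implicit. The extra requirement that $x$ and $y$ lie in distinct orbits is unnecessary, because the two copies of $G$ in $G\ast G$ are never identified with each other.
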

\begin{proof}
 Using the additivity of cross-index under joins (Proposition~\ref{prop: join}), we obtain
    \[
    \xind Q = \xind P + \xind Q_{m-2}G + 1 = (n - m + 1) + (m - 2) + 1 = n.
    \]
    To verify $\ind \Delta Q = m$, we first note the upper bound comes from the subadditivity of join (Inequality~\ref{eq: join}):
    \[
    \ind \Delta Q \le \ind \Delta P + \ind \Delta Q_{m-2}G + 1 = 1 + (m - 2) + 1 = m.
    \]
For the lower bound, because $\Delta P$ is connected, there exists a $G$-equivariant map $E_1G \to \Delta P$. This map extends to a $G$-equivariant map
$$E_mG = (G \ast G) \ast E_{m-2}G \longrightarrow \Delta P \ast \Delta Q_{m-2}G \cong_{G} \Delta Q.$$
Finally, monotonicity of the topological index yields:
    \[
    m = \ind E_mG \le \ind \Delta Q.
    \]
Thus, we have $\ind \Delta Q = m$ which completes the proof of our claim.
\end{proof}

\begin{remark}
Regarding Question~\ref{que: que3}, the answer is affirmative in the case where $G$ is a nontrivial (discrete) free group. Indeed, by Proposition~\ref{prop:xind} we have 
\(
\xind Q_n G = n,
\)
while it is known that 
\(
\ind \Delta(Q_n G) \le 1
\). More generally, if $G$ is a nontrivial free (discrete) group, then for every free $G$-space $X$, one has $\ind_G X \le 1$. Indeed, if $G$ is a non-trivial free group on a set $S$, then a standard model for the classifying space $BG$ is the rose $R_S$, i.e., a wedge of $|S|$ circles. Since $R_S$ is a $K(G,1)$, its universal cover $\widetilde{R_S}$ is contractible and admits a free $G$-action by deck transformations. The CW structure lifts to $\widetilde{R_S}$, making it a $1$-dimensional $G$-CW complex, namely the Cayley graph of $G$ with respect to $S$. Hence $\widetilde{R_S}$ serves as a $1$-dimensional model for the total space $EG$. For further details, see~\cite[Section 1.B]{Hatcher2002AlgebraicTopology}. Therefore, $EG$ has a $1$-dimensional free $G$-CW model $E_1G$, and equivariant obstruction theory guarantees the existence of a $G$-equivariant map\footnote{The obstruction theory here is rather trivial: The vertices of $EG$ can be $G$-equivariantly mapped to $E_1G=G*G$ arbitrarily. Then any edge $s$ representing an orbit $Gs$ may be mapped to $G*G$ using its connectedness, this map may be extended to $Gs$ by $G$-equivariance, and this may be repeated for any orbit of edges until the whole $EG$ is mapped.}
\[
EG \longrightarrow E_1G.
\]
Composing a given classifying map $X \to EG$ with this map yields a $G$-map $X \to E_1G$, and hence $\ind_G(X) \le 1$. For completeness, we describe the situation explicitly in the particular case $G = \mathbb{Z}$. In this case, a standard model for $EG$ is $\mathbb{R}$, with $\mathbb{Z}$ acting by integer translations. We now construct an explicit $\mathbb{Z}$-equivariant map
\[
f \colon \mathbb{R} \longrightarrow \mathbb{Z} * \mathbb{Z},
\]
For $x \in \mathbb{R}$, write $x = n + y$ with $n = \lfloor x \rfloor \in \mathbb{Z}$ and $y \in [0,1)$. Define
\[
f(x) =
\begin{cases}
[n, n, 2y], & 0 \le y \le \frac{1}{2},\\[4pt]
[n+1, n, 2 - 2y], & \frac{1}{2} \le y < 1.
\end{cases}
\]
At $y=0$ we obtain $f(n) = [n,n,0]$, a vertex in the first copy $\mathbb{Z}$, and at $y = \tfrac{1}{2}$ both formulas yield $[n,n,1]$, a vertex in the second copy $\mathbb{Z}$. As $y \to 1^{-}$, we have $f(n+y) \to [n+1,n,0]$, which matches continuously with the definition on the next interval.

To verify equivariance, let $m \in \mathbb{Z}$ and write $x = n + y$ as above. Then
\[
f(x+m) = f((n+m)+y) =
\begin{cases}
[n+m, n+m, 2y], & 0 \le y \le \frac{1}{2},\\[4pt]
[n+m+1, n+m, 2 - 2y], & \frac{1}{2} \le y < 1,
\end{cases}
\]
which is precisely $m \cdot f(x)$ under the diagonal action. Thus $f$ is a continuous $\mathbb{Z}$-equivariant map
\[
f \colon \mathbb{R} \to \mathbb{Z} * \mathbb{Z},
\]
given explicitly in the join coordinates, and this realizes the bound $\ind_{\mathbb{Z}}(\mathbb{R}) \le 1$ in this concrete model.
\end{remark}

\subsection{Cross-index of a union of zero index sets}

\label{subsection: union}
Now we are ready to examine the union property for cross-index which notably presents a behavior somewhat distinct from its topological counterpart. As stated in the introduction, the topological index enjoys the union property: for any free $G$-simplicial complex $X$ with $G$-equivariant subcomplexes $\mathcal{K}$ and $\mathcal{L}$ so that $X = \mathcal{K} \cup \mathcal{L}$, we have
\[
\ind X \le \ind \mathcal{K} + \ind \mathcal{L} + 1.
\]

However, before addressing the general case for the cross-index, we examine why a similar property holds for the face poset of simplicial complexes (beginning with the proof of Theorem~\ref{Thm: union1}) and as a corollary, we demonstrate how the topological version can be derived from this. This discussion requires the following preliminary definition and a proposition.

\begin{definition}
A subset $A$ of a poset $(P, \preceq)$ is called downward-closed if, whenever $y \in A$ and $x \preceq y$ for some $x \in P$, then $x \in A$.
\end{definition}

\begin{lemma}
\label{lem: downward-closed poset}
 Let $P=A\cup B$ be a free \(G\)-poset, where \(A\) and \(B\) are \(G\)-invariant subposets of $P$. If $A$ is downward-closed, then
\[
\xind P\le \xind A + \xind B + 1.
\]   
\end{lemma}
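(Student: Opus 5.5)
Let $m=\xind A$ and $n=\xind B$, and fix $G$-maps $\alpha\colon A\to Q_mG$ and $\beta\colon B\to Q_nG$. The plan is to glue these into a single map into $Q_{m+n+1}G$, in the spirit of the join construction in Proposition~\ref{prop: join}. We put $A$ in the lower block of values and the part of $B$ outside $A$ in the upper block. Concretely, define $\psi\colon P\to Q_{m+n+1}G$ by
\[
\psi(p)=\alpha(p)\quad\text{for } p\in A,\qquad
\psi(p)=(g,\,m+1+j)\ \text{ where } \beta(p)=(g,j),\quad\text{for } p\in P\setminus A=B\setminus A .
\]
Since $A$ and $B$ are $G$-invariant, so is $B\setminus A$. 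Hence $\psi$ is well defined and $G$-equivariant, because $\alpha$ and $\beta$ are and the value shift commutes with the action.

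The key step is checking that $\psi$ is order-preserving. Take $x\prec y$ in $P$; there are four cases.
\begin{itemize}
\item If $x,y\in A$, then $\psi=\alpha$ on both, and $\alpha$ is monotone.
\item If $x,y\in B\setminus A$, then both lie in $B$ and $\beta(x)\preceq\beta(y)$. The shift by $m+1$ preserves this relation in $Q_{m+n+1}G$.
\item If $x\in A$ and $y\notin A$, then $\psi(x)$ has value at most $m$ and $\psi(y)$ has value at least $m+1$. So $\psi(x)\prec\psi(y)$ regardless of signs.
\item If $x\notin A$ and $y\in A$, then downward-closedness of $A$ applied to $x\preceq y$ forces $x\in A$, a contradiction. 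So this case cannot occur.
\end{itemize}
Therefore $\psi$ is a $G$-map, and $\xind P\le m+n+1$.

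The only point needing care is the last case, which is exactly where the hypothesis that $A$ is downward-closed enters. Without that hypothesis, an element of $B\setminus A$ lying below an element of $A$ would receive a larger value than the element above it, and the construction breaks down. I expect no real obstacle beyond making sure the shifted values of $B\setminus A$ all exceed those of $A$. This is guaranteed by shifting by $m+1$, with the values of $\beta$ lying in $\{0,\dots,n\}$.
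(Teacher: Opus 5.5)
Your proof is correct and follows essentially the same route as the paper's: the paper also restricts to $B\setminus A=P\setminus A$ (using that the cross-index can only drop when passing to a subposet) and glues the two $G$-maps into $Q_mG*Q_nG\cong_{G}Q_{m+n+1}G$. As in your argument, downward-closedness of $A$ is used to rule out an element of $B\setminus A$ lying below an element of $A$, and your explicit shift of values by $m+1$ is just the concrete form of that join identification.
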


\begin{proof}
Without loss of generality, we may assume that $A\cap B=\emptyset$. Indeed, the general case $P=A\cup B$ can be reduced to the situation in which $B=P\setminus
A$. Since $P\setminus A\subset B$ and the cross-index is monotone non-increasing
under the passage to subposets, we have
\[
\xind(P\setminus A)\le \xind B.
\]
Therefore, it suffices to work under the assumption $A\cap B=\emptyset$. Moreover, in this setting, for any $x\in A$ and $y\in B$ it is impossible to have $y \prec x$, because in that case $y$
would belong to $A$, as $A$ is downward-closed. Given a pair of $G$-order preserving maps $A\to Q_nG$ and $B\to Q_mG$, we construct their union
\[
f:P\to Q_nG*Q_mG\cong_{G} Q_{n+m+1}G.
\]
The map $f$ is order-preserving when restricted to $A$ and to $B$ separately, so it
remains to verify monotonicity for pairs $(x,y)$ with $x\in A$ and $y\in B$. For such a pair, either $x$
and $y$ are incomparable, or $x \prec y$; in the latter case, by construction we have $f(x)\prec f(y)$. Consequently,
$f$ is a $G$-order-preserving, which implies
\[
\xind P\le \xind A+\xind B+1.\qedhere
\]
\end{proof}

\begin{proof}[Proof of Theorem \ref{Thm: union1}]
The theorem follows from Lemma~\ref{lem: downward-closed poset} together with the fact that any subcomplex of the face poset of a simplicial complex is downward-closed.
\end{proof}

As an immediate consequence of this combinatorial result, we recover the topological version of the union property.

\begin{corollary}{\cite{schw1966}}
Let $X = \mathcal{K} \cup \mathcal{L}$ be a free $G$-simplicial complex where $\mathcal{K}$ and $\mathcal{L}$ are its $G$-invariant subcomplexes. Then
\[
\ind X \le \ind \mathcal{K} + \ind \mathcal{L} + 1.
\]
\end{corollary}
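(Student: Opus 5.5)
We derive the corollary from Theorem~\ref{Thm: union1} by passing through barycentric subdivisions and Proposition~\ref{pro:Approximation}.

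First, the setup. Subdivision does not change the topological index: $\sd^r X$ is $G$-homeomorphic to $X$. It also respects the decomposition, since $\sd^r X=\sd^r\mathcal{K}\cup\sd^r\mathcal{L}$ with both pieces $G$-invariant subcomplexes. The action stays free, and we may assume $X$ is finite (or reduce to finite subcomplexes).

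Second, relate the pieces to face posets. For a subcomplex $\mathcal{K}$ we have $\Delta\mathcal{F}(\mathcal{K})=\sd\mathcal{K}$. Applying Proposition~\ref{pro:Approximation} to the free $G$-posets $\mathcal{F}(\mathcal{K})$ and $\mathcal{F}(\mathcal{L})$ gives integers $r_1,r_2$ with
\[
\xind\mathcal{F}(\sd^{r_i+1}\cdot)=\ind(\cdot).
\]
We need a common $r$ that works for both pieces simultaneously. For this, use that once the equality $\xind\mathcal{F}(\sd^{s}\mathcal{K})=\ind\mathcal{K}$ holds for some $s$, it persists for every larger $s'$. Indeed, there is a $G$-order-preserving map $\mathcal{F}(\sd Y)\to\mathcal{F}(Y)$ sending a chain of faces to its largest element. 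This gives
\[
\xind\mathcal{F}(\sd^{s'}\mathcal{K})\le\xind\mathcal{F}(\sd^{s}\mathcal{K})=\ind\mathcal{K},
\]
while inequality~\eqref{eq:index_inequality 3} gives the reverse bound $\ind\mathcal{K}=\ind\sd^{s'+1}\mathcal{K}\le\xind\mathcal{F}(\sd^{s'}\mathcal{K})$. So we take $r=\max(r_1,r_2)+1$.

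Third, chain the inequalities:
\[
\ind X\le\xind\mathcal{F}(\sd^rX)\le\xind\mathcal{F}(\sd^r\mathcal{K})+\xind\mathcal{F}(\sd^r\mathcal{L})+1=\ind\mathcal{K}+\ind\mathcal{L}+1.
\]
The first inequality is~\eqref{eq:index_inequality 3}, using $\Delta\mathcal{F}(\sd^rX)=\sd^{r+1}X\cong_G X$. The second is Theorem~\ref{Thm: union1} applied to $\sd^rX=\sd^r\mathcal{K}\cup\sd^r\mathcal{L}$.

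The main obstacle is the synchronization of subdivision levels: Proposition~\ref{pro:Approximation} only provides some $r$ for each poset separately. The monotonicity argument in the second step, via the last-vertex map $\mathcal{F}(\sd Y)\to\mathcal{F}(Y)$, is what resolves it.
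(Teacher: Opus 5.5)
Your proposal is correct and follows the paper's route: choose a common subdivision level via Proposition~\ref{pro:Approximation}, then apply Theorem~\ref{Thm: union1} to $\sd^r X=\sd^r\mathcal{K}\cup\sd^r\mathcal{L}$. The paper simply asserts that such a common $r$ exists. Your argument with the map $\mathcal{F}(\sd Y)\to\mathcal{F}(Y)$, sending a chain of faces to its largest element, supplies the justification that equality persists under further subdivision. You also correctly observe that for $X$ only inequality~\eqref{eq:index_inequality 3} is needed.
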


\begin{proof}
By Proposition~\ref{pro:Approximation}, there exists $r \ge 0$ such that 
$\ind X = \xind \mathcal{F}(\sd^r(X))$, $\ind \mathcal{K} = \xind \mathcal{F}(\sd^r(\mathcal{K}))$,
and $\ind \mathcal{L} = \xind \mathcal{F}(\sd^r(\mathcal{L}))$.
The result then follows from Theorem~\ref{Thm: union1}.
\end{proof}

Now we present the proof of the most surprising result of the paper, Theorem~\ref{Main:Theorem 1}. Before outlining the strategy of the proof, we introduce a piece of notation that will be used throughout.

\begin{definition}
For a group \(G\), define \(\mu(G)\) to be the supremum of the cross-index taken over all \(G\)-posets \(P\) that can be written as the union of two \(G\)-subposets of cross-index zero, that is,
\[
\mu(G) = \sup\{\xind P : P = A \cup B,\ \xind A = \xind B = 0\}.
\]
\end{definition}

Our strategy is as follows. First, we prove that \(\mu(G)\) is always bounded above by \(2\). Next, we adapt the argument used in the special case \(G = \mathbb{Z}_2\) to show that, in this situation, the upper bound can be improved from \(2\) to \(1\). Then, to establish the general inequalities in Theorem~\ref{Main:Theorem 1}, we use an induction argument to reduce the general case to the situation where \(\xind A=\xind B=0\).

\begin{theorem}\label{thm:bound <=2}
    For any non-trivial group $G$, we have $\mu(G)\le 2$.
\end{theorem}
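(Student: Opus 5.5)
The plan is to write down an explicit $G$-map $P\to Q_2G$ by sorting the elements of $P$ into three $G$-invariant layers and then checking monotonicity case by case, using the partition description of Proposition~\ref{proposition:partition-xind}. Fix $G$-maps $f\colon A\to Q_0G=G$ and $g\colon B\to Q_0G=G$. Since $Q_0G$ is an antichain, order preservation means exactly that comparable elements of $A$ receive the same $f$-sign, and comparable elements of $B$ receive the same $g$-sign.

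Let $H$ be the ``convex hull'' of $B$:
\[
H=\{x\in P:\ \exists\, b,b'\in B \text{ with } b\preceq x\preceq b'\}\supseteq B .
\]
The remaining elements lie in $A$, and I split them into two sets:
\begin{itemize}
\item $L$, the elements of $A\setminus H$ with no element of $B$ below them;
\item $U$, the elements of $A\setminus H$ that do have some $b\in B$ below them. By definition of $H$, such an element has no element of $B$ above it.
\end{itemize}
All three sets are $G$-invariant because $A$ and $B$ are. Define $\psi$ as follows:
\begin{itemize}
\item $\psi(x)=(f(x),0)$ for $x\in L$;
\item $\psi(x)=(f(x),2)$ for $x\in U$;
\item $\psi(x)=(g(b),1)$ for $x\in H$, where $b\in B$ is any element with $b\preceq x\preceq b'$ for some $b'\in B$.
\end{itemize}

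The key point, and the only step needing care, is that the sign on $H$ is well defined. Suppose $b_1\preceq x\preceq b_1'$ and $b_2\preceq x\preceq b_2'$ with all four in $B$. Then $b_1\preceq b_1'$, $b_1\preceq b_2'$ and $b_2\preceq b_1'$. Since $g$ is order-preserving into an antichain, these give $g(b_1)=g(b_1')=g(b_2)$. Equivariance of $\psi$ on $H$ follows because $gb\preceq gx\preceq gb'$. The same argument gives monotonicity inside $H$. If $x\prec y$ in $H$, with $b\preceq x$ and $y\preceq b'$, then $b\prec b'$. Hence the sign of $x$ (computed via $b$) equals the sign of $y$ (computed via $b'$). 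Inside $L$ and inside $U$, comparable elements get equal signs by the choice of $f$.

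It remains to rule out every comparability $x\prec y$ with $x$ in a higher layer than $y$:
\begin{itemize}
\item If $x\in H$, then $x$ has some $b\in B$ below it, so $y$ does too. Hence $y\notin L$.
\item If $x\in U$, then $y$ again has an element of $B$ below it, so $y\notin L$. Also $y\notin H$, since otherwise some $b'\in B$ would lie above $y$ and hence above $x$, putting $x\in H$.
\end{itemize}
So $y$ lies in the same layer as $x$ or in a higher one, and $\psi$ is a $G$-map into $Q_2G$. This gives $\xind P\le 2$ for every $P=A\cup B$ with $\xind A=\xind B=0$, i.e.\ $\mu(G)\le 2$. Note that no assumption $A\cap B=\emptyset$ is needed, since $H$ absorbs all of $B$. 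I expect the only genuine obstacle to be finding the middle layer $H$ with its well-defined $B$-sign; the naive choice of putting exactly $B$ in the middle layer fails because of chains $b\prec a\prec b'$.
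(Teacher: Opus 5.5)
Your proof is correct and is essentially the paper's argument with the roles of $A$ and $B$ interchanged. The paper enlarges $A$ by the elements of $B$ lying strictly between two elements of $A$ (i.e.\ it takes the convex hull of $A$) and gives this set value $1$ with the $A$-sign. It then places the remaining elements of $B$ at value $0$ or $2$ according to whether they lie above some element of $A$, which is exactly the mirror image of your $L$, $H$, $U$. Your version also drops the paper's preliminary reduction to $A\cap B=\emptyset$ and checks all three cross-layer cases explicitly. One cosmetic fix: rename the sign map on $B$, since you also use $g$ for group elements.
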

\begin{proof}
    Let $P=A\cup B$ for 
    the two $G$-posets $A,B$ with $\xind A=\xind B=0$ and the two $G$-maps $\psi_A,\psi_B$. We show that $\xind P\le 2$ by constructing a $G$-map $\psi$. First, observe that we may assume without loss of generality that $A$ and $B$ are disjoint. Indeed, if they are not disjoint, then we can replace $B$ with $B \setminus A$. Now, let $B_A$ be the set of those $b\in B$ that have $a',a''\in A$ such that $a'\prec b\prec a''$. Note that from transitivity and the property of $\psi_A$ it follows that $\psi_A(a')=\psi_A(a'')$. Hence we may correctly (independently on the choice of $a'$ and $a''$) extend $\psi_A(b)=\psi_A(a')=\psi_A(a'')$. This extension is also equivariant, since $a'\prec b\prec a''$ implies $ga'\prec gb\prec ga''$. The extended $\psi_A : A\sqcup B_A\to G$ still maps any edge of the graph of $P$ to a single point. Indeed, if $b_1\prec b_2\in B_A$, then this chain can be extended to $a_1'\prec b_1\prec b_2\prec a_2''$, implying $a_1'\prec a_2''$, implying $\psi_A(a_1')=\psi_A(b_1)=\psi_A(b_2)=\psi_A(a_2'')$. If $A\ni a'\prec b\in B_A$ then $\psi_A(a')=\psi_A(b)$ from the correctness of the extension of $\psi_A$. After merging $B_A$ to $A$ (using the extended $\psi_A$) and removing $B_A$ from $B$ the remaining subset $B$ partitions in two parts: $D(B)$ containing elements of $B$ not comparable to $A$ or below some elements of $A$ and $U(B)$ containing elements of $B$ above some elements of $A$.

    We construct a map $\psi$ as follows:
    \begin{itemize}
        \item if $x\in D(B)$ then $\psi(x) = (g,0)$ if $\psi_B(x) = (g,0)$.
        \item if $x\in A$ then $\psi(x) = (g,1)$ if $\psi_A(x) = (g,0)$.
        \item if $x\in U(B)$ then $\psi(x) = (g,2)$ if $\psi_B(x) = (g,0)$.
    \end{itemize}

    We can easily check that this is a $G$-map. In order to check its monotonicity one essentially has to consider $x\prec y$ in different parts of $B$. Note that $x\in U(B)$ and $y\in D(B)$ is impossible since it would imply some element of $A$ is below $y$, contradicting the definition of $D(B)$. So the only possibility is $x\in D(B)$ and $y\in U(B)$, which does not violate the monotonicity.
\end{proof}

\begin{theorem}
\label{thm: mu(Z_2)=1}
    $\mu(\mathbb{Z}_2)=1$.
\end{theorem}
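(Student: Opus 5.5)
The statement has two halves: the lower bound $\mu(\mathbb{Z}_2)\ge 1$ and the upper bound $\mu(\mathbb{Z}_2)\le 1$.

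\emph{Lower bound.} Take $P=Q_1\mathbb{Z}_2$, with $A=\mathbb{Z}_2\times\{0\}$ and $B=\mathbb{Z}_2\times\{1\}$. Each of $A$ and $B$ is an antichain, so each has cross-index zero. By Proposition~\ref{prop:xind}, $\xind P=1$.

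\emph{Upper bound: setup.} Let $P=A\sqcup B$ with $G$-maps $\psi_A\colon A\to\mathbb{Z}_2$ and $\psi_B\colon B\to\mathbb{Z}_2$; as in the proof of Theorem~\ref{thm:bound <=2} we may assume $A\cap B=\emptyset$. By Proposition~\ref{proposition:partition-xind}, it suffices to build an equivariant map $\psi\colon P\to Q_1\mathbb{Z}_2$ whose only constraint is this: if $x\prec y$ and $\psi(x),\psi(y)$ have different signs, then $\psi(x)$ has value $0$ and $\psi(y)$ has value $1$. I would first run the same preprocessing as in Theorem~\ref{thm:bound <=2}, absorbing $B_A$ into $A$. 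I would also run the symmetric step, absorbing into $B$ every $a\in A$ that is sandwiched between two elements of $B$. After this, no element of one part lies strictly between two elements of the other part.

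\emph{Upper bound: the $\mathbb{Z}_2$ trick.} The feature special to $\mathbb{Z}_2$ is that a sign is determined by ``same as'' or ``opposite to'' a reference sign. So a wrong relative sign between an $A$-element and a $B$-element can be repaired by swapping which of the two orbit representatives is called positive, independently on each comparability component of $B$ (and of $A$); for larger groups such a swap is not determined. The plan is as follows.
\begin{itemize}
\item[(1)] Place $A$ at value $0$ and $U(B)$ at value $1$. Within $U(B)$, choose the signs of each comparability component so that the component agrees with the $A$-elements below it wherever possible.
\item[(2)] Handle $D(B)$ the same way, with the roles of above and below reversed.
\item[(3)] Conflicts that cannot be removed by swapping signs are pairs $x\prec y$ with $x\in A$ and $y\in U(B)$ of opposite sign. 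These already have values $0<1$, so they are harmless.
\end{itemize}
The real difficulty is an element of $D(B)$ lying below an $A$-element of opposite sign. I would try to show that, after the sandwich removal, such elements can be moved to value $0$ and the $A$-elements above them lifted to value $1$ without creating a violation against $U(B)$. The argument would use the fact that an $A$-element lying between $D(B)$ and $U(B)$ would force a $B$-chain through it, which was excluded by the symmetric preprocessing.

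\emph{Expected obstacle.} The main obstacle is the last step: showing that the choices ``value of $a\in A$'' and ``sign orientation of each component of $B$'' can be made simultaneously consistent. I would set this up as a $2$-colouring (parity) problem on an auxiliary graph whose vertices are the comparability components of $A$ and of $B$ modulo $\mathbb{Z}_2$. An edge would record each forced equality or inversion of signs. I would then argue that this graph has no odd-cycle obstruction, because any such cycle would produce a path in the comparability graph of $A$ or of $B$ between $x$ and $-x$. That contradicts $\xind A=\xind B=0$, by the path criterion from \cite[Proposition 2]{bui2023topological} invoked in the proof of Lemma~\ref{lem: Xind P_G=2}.
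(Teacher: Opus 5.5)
Your lower bound is fine, but the upper bound has a genuine gap.

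First, the criterion you read off from Proposition~\ref{proposition:partition-xind} is incomplete. For $x\prec y$ you also need the value of $\psi(x)$ to be at most that of $\psi(y)$ when the signs \emph{agree}, since $(+,1)\not\preceq(+,0)$ in $Q_1\mathbb{Z}_2$. Your symmetric preprocessing breaks exactly this condition, and it does not deliver what you claim: the second absorption can undo the first. Take two swapped copies of the poset generated by $a'\prec a\prec a''$ and $b'\prec a\prec b''$, with $A=\{a',a,a''\}\times\mathbb{Z}_2$ and $B=\{b',b''\}\times\mathbb{Z}_2$.
\begin{enumerate}
\item The first step moves nothing.
\item The second step moves $a$ into $B$, and now $a'\prec a\prec a''$ with $a\in U(B)$.
\item Placing $A$ at value $0$ and $U(B)$ at value $1$ then violates monotonicity at $a\prec a''$, and re-running the first step just moves $a$ back.
\end{enumerate}
So your procedure cannot guarantee both ``no $A$-element between $D(B)$ and $U(B)$'' and ``no $U(B)$-element below $A$'', and your plan needs both.

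Second, the decisive step is only sketched, and the parity argument does not work as stated. The sign constraints propagate through comparabilities \emph{between} $A$ and $B$. An inconsistent cycle therefore yields a path from $x$ to $-x$ in the comparability graph of $P$, not of $A$ or of $B$ alone. Such paths are not excluded by $\xind A=\xind B=0$; they already occur in $Q_1\mathbb{Z}_2$.

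The missing idea is that you need no reorientation and no symmetric step, only a sign flip for the lifted elements. Do the first absorption only, keep the given signs, and define $\psi$ as follows:
\begin{enumerate}
\item $\psi(d)=(\psi_B(d),0)$ for $d\in D(B)$, and $\psi(u)=(\psi_B(u),1)$ for $u\in U(B)$;
\item $\psi(a)=(\psi_A(a),0)$ for $a\in A$, unless $a$ lies above some $d\in D(B)$ with $\psi_B(d)\ne\psi_A(a)$;
\item in that case, $\psi(a)=(-\psi_A(a),1)$.
\end{enumerate}
This is where $\mathbb{Z}_2$ is used: $\psi_B(d)\ne\psi_A(a)$ means $\psi_B(d)=-\psi_A(a)$. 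So if such an $a$ lies below some $u\in U(B)$, then $d\prec u$ forces $\psi_B(u)=\psi_B(d)$, which is exactly the sign given to $a$.

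The remaining checks follow from four facts:
\begin{enumerate}
\item The lifted set is upward closed in $A$: if $a\prec a'$, then $a'$ lies above the same $d$ and $\psi_A(a')=\psi_A(a)$.
\item Unlifted elements agree in sign with every $D(B)$-element below them.
\item $U(B)$ lies below no element of $A\cup D(B)$.
\item $D(B)$ lies above no element of $A$.
\end{enumerate}
With these, every remaining check is immediate. This is precisely the paper's decomposition into $X_\pm$ and $Y_\pm$.
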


\begin{proof}
We continue the previous proof with $G=\mathbb Z_2$. Denote $D(B)$ and $U(B)$ by $D$ and $U$, since we are going to use subscripts with them. Denote the elements of $G$ by $\{+1, -1\}$. The $G$-map $D\sqcup U\to G$ means a splitting $D=D_+\sqcup D_-$ and $U=U_+\sqcup U_-$ so that $D_+$ is not connected to either $D_-$ or $U_-$ in the graph of $P$, and $D_-$ is not connected to either $D_+$ or $U_+$ in the graph of $P$, and
    \[
    -1\cdot D_+=D_-, \quad -1\cdot D_-=D_+, \quad -1\cdot U_+=U_-, \quad -1\cdot U_-=U_+.
    \]
    
    Let us first split $A=A_\emptyset\sqcup A_+\sqcup A_-\sqcup A_\pm$, where $A_\emptyset$ consists of elements not above any element of $D$, $A_+$ consists of elements above some element of $D_+$ and not above any element of $D_-$, $A_-$ consists of elements above some element of $D_-$ and not above any element of $D_+$, $A_\pm$ consists of elements that are above some element of $D_+$ and above some element of $D_-$. 
    
    From transitivity it follows that the elements of $A_\emptyset$ cannot be above any element of $A_+\sqcup A_-\sqcup A_\pm$, $A_+$ cannot be above any element of $A_-\sqcup A_\pm$, $A_-$ cannot be above any element of $A_+\sqcup A_\pm$. Moreover, from transitivity no element of $A_\pm$ is below any element of $U$.   
    
    For the group action,
    \[
    -1\cdot A_\emptyset= A_\emptyset, \quad -1\cdot A_\pm= A_\pm, \quad -1\cdot A_+=A_-,\quad -1\cdot A_-=A_+.
    \]
    
    Now we invoke the $G$-equivariant map $A\to G$ that splits $A=A^+\sqcup A^-$ so that $-1\cdot A^+=A^-$ and $-1\cdot A^+=A^-$ and also splits
    \[
    A_\emptyset = A_\emptyset^+\sqcup A_\emptyset^-,\quad 
    A_+ = A_+^+\sqcup A_+^-,\quad A_- = A_-^+\sqcup A_-^-,\quad A_\pm = A_\pm^+\sqcup A_\pm^-.
    \]
    All these sets are send to each other by the action of $G$.
    
    Now set
    \[
    X_+ = D_+\sqcup A_+^+\sqcup A_\emptyset^+,\quad X_-=D_-\sqcup A_-^-\sqcup A_\emptyset^-
    \]
    and
    \[
    Y_+ = U_+\sqcup A_+^-\sqcup A_\pm^-,\quad Y_-=U_-\sqcup A_-^+\sqcup A_\pm^+.
    \]
    For the group action,
    \[
    -1\cdot X_-=X_+, \quad -1\cdot X_+=X_-,\quad -1\cdot Y_- = Y_+, \quad -1\cdot Y_+ = Y_-.
    \]

To elucidate the relations among these 12 sets defined in the proof, the Hasse diagram in Figure~\ref{fig:mu(Z_2)=1} provides a compact visual representation.

\begin{figure}[h!]
    \centering
\begin{tikzpicture}[
    node style/.style={circle, draw=black, thick, font=\sffamily\bfseries, minimum size=0.8cm, text depth=0pt, align=center}
]

    \definecolor{myblue}{RGB}{50, 130, 230}
    \definecolor{mypink}{RGB}{230, 50, 130}
    \definecolor{mygreen}{RGB}{50, 200, 50}
    \definecolor{myyellow}{RGB}{230, 200, 50}

    \node[node style, fill=myblue] (U_plus) at (0, 6) {\(U_+\)};
    \node[node style, fill=mypink] (A_bar_plus) at (2, 6) {\(A_{\pm}^{+}\)};
    \node[node style, fill=myblue] (A_bar_minus) at (4, 6) {\(A_{\pm}^{-}\)};
    \node[node style, fill=mypink] (U_minus) at (6, 6) {\(U_{-}\)};

    \node[node style, fill=mygreen] (A_plus_green) at (0, 4) {\(A_+^+\)};
    \node[node style, fill=myblue] (A_plus_blue) at (2, 4) {\(A_+^{-}\)};
    \node[node style, fill=mypink] (A_bar_plus_mid) at (4, 4) {\(A_{-}^{+}\)};
    \node[node style, fill=myyellow] (A_minus_yellow) at (6, 4) {\(A_{-}^-\)};

    \node[node style, fill=mygreen] (D_plus) at (0, 2) {\(D_+\)};
    \node[node style, fill=mygreen] (A_empty_green) at (2, 2) {\(A_{\emptyset}^{+}\)};
    \node[node style, fill=myyellow] (A_empty_yellow) at (4, 2) {\(A_{\emptyset}^{-}\)};
    \node[node style, fill=myyellow] (D_minus) at (6, 2) {\(D_-\)};

\draw[->] (A_plus_green) -- (U_plus);
\draw[->] (A_plus_blue) -- (U_plus);

\draw[->] (A_plus_green) -- (A_bar_plus);
\draw[->] (A_bar_plus_mid) -- (A_bar_plus);

\draw[->] (A_plus_blue) -- (A_bar_minus);
\draw[->] (A_minus_yellow) -- (A_bar_minus);

\draw[->] (A_bar_plus_mid) -- (U_minus);
\draw[->] (A_minus_yellow) -- (U_minus);

\draw[->] (D_plus) -- (A_plus_green);
\draw[->] (A_empty_green) -- (A_plus_green);

\draw[->] (D_plus) -- (A_plus_blue);
\draw[->] (A_empty_yellow) -- (A_plus_blue);

\draw[->] (A_empty_green) -- (A_bar_plus_mid);
\draw[->] (D_minus) -- (A_bar_plus_mid);

\draw[->] (A_empty_yellow) -- (A_minus_yellow);
\draw[->] (D_minus) -- (A_minus_yellow);

\end{tikzpicture}
    \caption{A Hasse diagram showing the 12 subsets $A_*^*$, $D_*$, $U_*$ and possible arrows between them}
    \label{fig:mu(Z_2)=1}
\end{figure}
    
    Observe that $X_+$ is not connected to $X_-$, because $D_+$ is not connected to $D_-$, $D_+$ is not connected to $A_\emptyset$ or $A_-$, $D_-$ is not connected to $A_\emptyset$ or $A_+$, $A^+$ is not connected to $A^-$.
    
    $Y_+$ is not connected to $Y_-$, because $U_+$ is not connected to $U_-$, $U_+$ is not connected to $A_\pm$ or $A_-$, $U_-$ is not connected to $A_\pm$ or $A_+$, $A^+$ is not connected to $A^-$.
    
    No element of $X_+\sqcup X_-$ is above an element of $Y_-\sqcup Y_+$. Let us show this assuming without loss of generality that $x\in X_+$:
    \begin{itemize}
    \item
    $x\in D_+$ cannot be above any element of $A$ and above any element of $U$;
    \item
    $x\in A_+^+$ cannot be above any element of $U$ or above any element of $A^-\cup A_\pm\cup A_-$;
    \item
    $x\in A_\emptyset^+$ cannot be above any element of $U$ or above any element of $A_+\sqcup A_-\sqcup A_\pm$ or above any element of $A_\emptyset^-$.
    \end{itemize}
    So we construct a $G$-equivariant monotone map
    \[
    \phi(X_+) = (+,0),\quad \phi(X_-) = (-,0),\quad \phi(Y_+) = (+,1),\quad \phi(Y_-) = (-,1)
    \]
    that certifies $\xind P\le 1$.
\end{proof}

\subsection{Cross-index of a union in general}

We are now prepared to present the proof in the general setting. Before proceeding, we record a useful observation that will be employed repeatedly in both parts. 

\begin{lemma}
\label{lemma:move-down}
For a $G$-poset $A$, we may $G$-invariantly decompose
\begin{equation}
\label{equation:xind-decomposition}
A \;=\; A_0 \sqcup A_1 \sqcup \dots \sqcup A_{\xind A}
\end{equation}
in such a way that 

1) $\xind A_i = 0$ for any $i$;

2) every element of $A_i$ is either strictly less than or incomparable with every element of $A_j$ for any $j>i$;

3) for any $i \ge 1,\ldots,\xind A$ and any element $y \in A_i$, there exists an element $x \in A_{i-1}$ such that $x\prec y$.
\end{lemma}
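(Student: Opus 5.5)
Let $n=\xind A$ and fix a $G$-map $\psi\colon A\to Q_nG$. By Proposition~\ref{proposition:partition-xind}, the preimages $B_i=\psi^{-1}(G\times\{i\})$ form a $G$-invariant partition $A=B_0\sqcup\dots\sqcup B_n$. Each $B_i$ has cross-index zero, because $\psi|_{B_i}$ composed with the projection to the sign is a $G$-map to $Q_0G=G$; it is order-preserving since two comparable elements of the same value level must carry the same sign. Moreover no element of $B_i$ lies strictly above an element of $B_j$ for $j>i$. So conditions 1) and 2) already hold for this partition. The whole task is to adjust the partition until 3) also holds. The natural adjustment is to push every element down as far as it can go, which amounts to choosing $\psi$ with values that are minimal in a suitable sense.

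The plan is a "gravity" procedure on $\psi$. Let $y\in A_i$ with $i\ge1$ have no element of $A_{i-1}$ strictly below it. We lower $y$, together with its whole orbit $Gy$, to value $i-1$, keeping its sign. Equivariance is automatic because we move entire orbits. Two things must be checked.

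First, monotonicity. Take $x\prec y$. By 2), $x$ lies in some $A_j$ with $j\le i$. The case $j=i-1$ is excluded by hypothesis. If $j\le i-2$, then after the move $\psi(x)\prec\psi(y)$ still holds. If $j=i$, then $x$ and $y$ carry the same sign, and the new value of $y$ is $i-1<i$. That would break monotonicity, so instead we choose $y$ \emph{minimal} within $A_i$ among the elements lacking a predecessor in $A_{i-1}$. Any $x\prec y$ in $A_i$ then itself lacks a predecessor in $A_{i-1}$: such a predecessor would also lie below $y$ by transitivity. This contradicts minimality, so the case cannot occur. Elements above $y$ only see its value decrease, so they cause no problem.

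Second, that $A_{i-1}$ keeps cross-index zero, i.e. that the sign map is still order-preserving on it. Comparable pairs that involve $y$ and lie inside the new $A_{i-1}$ can only be of the form $y\prec z$ with $z$ already in $A_{i-1}$. But $z\in A_{i-1}$ and $y\in A_i$ with $y\prec z$ is forbidden by 2). So no new comparabilities appear inside the new level $A_{i-1}$, and no new level-inversion appears either.

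For termination we use the potential $\sum_{x}\mathrm{value}(x)$ summed over orbit representatives. It is a nonnegative integer and strictly decreases with each move, so the procedure stops when $A$ is finite. In the infinite case we instead define the new level of $x$ directly as $\ell(x)$, the length of the longest chain $x_0\prec\dots\prec x_k=x$ whose consecutive values drop by exactly one each step. Equivalently, we take the pointwise minimum over the process. The main obstacle is to make this infinite or limit version rigorous, together with the bookkeeping in the monotonicity check. I would first try to handle it by defining the final levels directly through this downward recursion and then verifying 1)--3) for it.

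At termination, every $y\in A_i$ with $i\ge1$ has a predecessor in $A_{i-1}$, which is 3). Finally, the number of levels can only shrink. Some levels may become empty, but the top level cannot empty out. If it did, we would obtain a map to $Q_{n-1}G$, contradicting $n=\xind A$. Hence the indices still run over $0,\dots,\xind A$, which gives \eqref{equation:xind-decomposition}.
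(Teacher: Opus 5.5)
For $A$ with finitely many orbits your argument is correct, and it is essentially the paper's: fix $\psi\colon A\to Q_{\xind A}G$, keep the signs, and lower elements that have no predecessor in the level just below. The difference is in what gets moved. The paper lowers the whole set $\{y'\in A_i : y'\preceq y\}$ of a violator $y$ at once, so it never needs a minimal violator; you lower a minimal one. Your potential (sum of values over orbit representatives) controls an iteration that is genuinely needed. A single pass over $i=1,\dots,\xind A$, as the paper's sketch reads literally, is not enough: an orbit comparable to nothing, placed at value $2$, drops to level $1$ during the pass $i=2$ and then has no predecessor in level $0$.

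The genuine gap is the case of infinitely many orbits, which the lemma, stated for an arbitrary $G$-poset, must cover. There a level can contain an infinite descending chain of violators, so no minimal violator need exist, and your potential is infinite. The replacement you propose does not work, because $\ell$ is not even monotone. Take a disjoint copy of $Q_3G$ together with an orbit of chains $gc\prec ga\prec gb$, where $\psi(gc)=(g,0)$, $\psi(ga)=(g,1)$ and $\psi(gb)=(g,3)$. Then $\xind A=3$ and $\ell(a)=1$ via $c\prec a$. But $\ell(b)=0$, since nothing of value $2$ lies below $b$, even though $a\prec b$. Your own process would stop with $b$ at level $2$, so $\ell$ is not its pointwise limit either.

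A definition that works for arbitrary $A$ is this. Keep the signs $\sigma$ of $\psi$, and let $\lambda(x)$ be the maximal number of sign changes $\sigma(x_j)\ne\sigma(x_{j+1})$ along a chain $x_0\prec\dots\prec x_k=x$. The required properties then follow in turn:
\begin{itemize}
\item A sign change along $x_j\prec x_{j+1}$ forces a strict increase of the $\psi$-value, so $\lambda(x)\le\xind A$, and $\lambda$ is $G$-invariant.
\item Appending $b$ to chains ending at $a\prec b$ gives $\lambda(a)\le\lambda(b)$, strictly if $\sigma(a)\ne\sigma(b)$. This yields 1) and 2) for the levels $\lambda^{-1}(i)$ with sign map $\sigma$.
\item If $\lambda(y)=i\ge 1$, take an optimal chain for $y$. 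The element just before its last sign change lies below $y$ and has sign different from $\sigma(y)$, so its $\lambda$ is exactly $i-1$. This is 3).
\end{itemize}
Your closing minimality argument then shows that the top index is exactly $\xind A$.
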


In the remaining proofs we assume that the decomposition of Lemma~
\ref{lemma:move-down} takes place.

\begin{proof}
By Proposition~\ref{proposition:partition-xind} we have \eqref{equation:xind-decomposition} satisfying properties (1) and (2) from the list.  

If (3) is violated for some $y \in A_i$ (and for any $gy$ in its orbit $Gy$ by $G$-invariance of the decomposition~\eqref{equation:xind-decomposition}), then let $A_i(y) = \{y'\in A_i : y'\prec y\}$. Let us move all $gA_i(y)=A_i(gy)$, $g\in G$, to $A_{i-1}$. In terms of the corresponding map $P\to Q_{\xind P} G$, we preserve the signs (the $g$ of $(g, i)\in Q_{\xind P}G$) and decrease the value (the $i$ of $(g, i)\in Q_{\xind P}G$) for $\bigcup_{g\in G} A_i(gy)$ under such a move by $1$. The resulting $G$-map is clearly $G$-equivariant and well-defined, since any pair of sets $A_i(gy)$ and $A_i(g'y)$ is pairwise disjoint for $g\neq g'$ ($gy$ is not connected to $g'y$ in $A_i$). The monotonicity, property (2), is not violated since by transitivity $x\prec y'$ for $x\in A_{i-1}$ and $y'\in A_i(gy)$ is impossible. 

Moreover, such moves can be made for all $y\in A_i$ that do not have $x\in A_{i-1}$ such that $x\prec y$ at once. Indeed, the $G$-map $A_i\to G$ certifying $\xind A_i=0$ just becomes a part of the well-defined monotone $G$-map $A_{i-1}\to G$ certifying $\xind A_{i-1}=0$. Doing such moves for $i=1,\ldots,\xind A$ one achieves property (3).
\end{proof}

\begin{proof}[Proof of Theorem~\ref{Main:Theorem 1} (a)]

We may reduce the general situation to the case \(\xind B = 0\) with \(\xind A\) arbitrary. Indeed, we can decompose \(B = B_0 \sqcup B_1 \sqcup \dots \sqcup B_{\xind B}\) with respect to some \(G\)-map on \(B\), and then infer the general case 
\begin{multline*}
\xind A\cup (B_0\sqcup\dots\sqcup B_{\xind B}) \le \xind A\cup (B_0\sqcup\dots\sqcup B_{\xind B-1}) + 2 \le \\
\dots \le \xind A\cup (B_0\sqcup\dots\sqcup B_{\xind B-k}) + 2k\le
\dots \le \xind A + 2(\xind B + 1)
\end{multline*}
only using the case $\xind B_k=0$ in the chain of inequalities. Before establishing the theorem in the case \(\xind B = 0\), we recall the argument for the special case \(\xind A = \xind B = 0\) from the proof of Theorem~\ref{thm:bound <=2}.

\begin{itemize}
    \item First, we transfer to \(A\) all elements \(b \in B\) that lie strictly between two elements \(a, a' \in A\), i.e., those satisfying \(a \prec b \prec a'\).
    \item Next, we decompose \(B\) as a disjoint union \(B = U \sqcup D\), where \(U\) consists precisely of those elements that are strictly greater than at least one element of \(A\), and \(D\) consists of those elements that are either strictly smaller than at least one element of \(A\) or incomparable with every element of \(A\).
\end{itemize}

We now show that the general case of arbitrary cross-index \(\xind A\) can be reduced to the special case \(\xind A = 0\), while \(\xind B\) remains equal to \(0\). Specifically, we argue by induction on \(\xind A\). The base case of this induction has already been established in Theorem~\ref{thm:bound <=2}. To make a step consider the decomposition \(A = A_0 \sqcup A_1 \sqcup \dots \sqcup A_{\xind A}\) induced by a given \(G\)-map on \(A\). Since \(\xind A_0 = \xind B = 0\), we apply the preceding procedure to transfer certain elements of \(B\) into \(A_0\), while preserving the structural properties of the decomposition \(A = A_0 \sqcup A_1 \sqcup \dots \sqcup A_{\xind A}\). We then partition the remaining subset of \(B\) as \(B = D \sqcup U\). Note that elements in \(D \cup A_0\) are either smaller than or incomparable to elements in \(U \cup (A_1 \sqcup \dots \sqcup A_{\xind A})\). The comparison between elements in \(D \cup A_0\) and \(U\) is immediate from the construction, and so is the comparison between \(A_0\) and \(A_i\) for \(i > 0\). It remains to verify the comparison between elements in \(D\) and elements in \(A_i\) for \(i > 0\). Indeed, \(D\) cannot contain an element \(u\) larger than an element \(v \in A_i\), since otherwise Lemma~\ref{lemma:move-down} implies that \(v\) must be larger than some element \(w \in A_0\). Hence \(u\) would be larger than \(w \in A_0\), contradicting the assumption \(u \in D\). For the \(G\)-map, we assign the two values \(0, 1\) to the elements in \(D\) and \(A_0\), respectively, and then apply the induction hypothesis to obtain the corresponding values, starting from \(2\), for elements in \(U \cup (A_1 \sqcup \dots \sqcup A_{\xind A})\), since \(\xind(A_1 \sqcup \dots \sqcup A_{\xind A})\) is smaller than \(\xind A\), while \(\xind U = 0\). This completes the induction, and the conclusion follows.
\end{proof}

\begin{proof}[Proof of Theorem~\ref{Main:Theorem 1} (b)]
As before, we may reduce the problem to the case \(\xind B = 0\) while allowing \(\xind A\) to be arbitrary. Specifically, we decompose \(B = B_0 \sqcup B_1 \sqcup \dots \sqcup B_{\xind B}\) with respect to a suitable \(G\)-map on \(B\), and then infer the general case 
\begin{multline*}
\xind A\cup (B_0\sqcup\dots\sqcup B_{\xind B}) \le \xind A\cup (B_0\sqcup\dots\sqcup B_{\xind B-1}) + 1 \le \\
\dots \le \xind A\cup (B_0\sqcup\dots\sqcup B_{\xind B-k}) + k\le
\dots \le \xind A + \xind B + 1
\end{multline*}
only using the case $\xind B_k=0$ in the chain of inequalities. Before proving the theorem in the case \(\xind B = 0\), we briefly recall the argument in the special case \(\xind A = \xind B = 0\), as presented in Theorem~\ref{thm: mu(Z_2)=1}.
\begin{enumerate}
    \item First, we move into \(A\) all elements \(b \in B\) for which there exist elements \(a, a' \in A\) such that \(a \prec b \prec a'\).
    \item Next, we decompose \(B\) as a disjoint union \(B = U \sqcup D\), where \(U\) consists of those elements that are strictly greater than at least one element of \(A\), and \(D\) consists of those elements that are either strictly smaller than at least one element of \(A\) or incomparable with every element of \(A\).
    \item Finally, we define \(X = D \cup A'\) and \(Y = A'' \cup U\), for some decomposition \(A = A' \sqcup A''\), in such a way that \(\xind X = \xind Y = 0\). Moreover, every element of \(X\) is either smaller than or incomparable to every element of \(Y\).
\end{enumerate}
As in the previous proof, we now reduce the case of arbitrary \(\xind A\) to the case \(\xind A = 0\), still assuming \(\xind B = 0\). We proceed by induction on \(\xind A\), assuming that the statement holds for all smaller values of the cross-index of \(A\). The base case has already been established in Theorem~\ref{thm: mu(Z_2)=1}. Choose a decomposition \(A = A_0 \sqcup A_1 \sqcup \dots \sqcup A_{\xind A}\) with respect to a \(G\)-map for \(A\). Since \(\xind A_0 = \xind B = 0\), we may apply the above procedure to \(A_0\) and \(B\). First, we move some elements of \(B\) into \(A_0\) in such a way that the decomposition \(A = A_0 \sqcup A_1 \sqcup \dots \sqcup A_{\xind A}\) continues to satisfy the required properties by transitivity. Then we define \(X = D \cup A_0'\) and \(Y = A_0'' \cup U\), for a decomposition \(A_0 = A_0' \sqcup A_0''\), so that \(\xind X = \xind Y = 0\). Here, \(B = D \sqcup U\) is constructed with respect to \(A_0\). Observe that every element of \(X\) is either smaller than or incomparable to every element of \(Y \cup (A_1 \sqcup \dots \sqcup A_{\xind A})\). The relation between elements of \(X\) and \(Y\) follows directly from the construction, as wells as the relation between the elements of $A_0'\subseteq X$ and $A_1 \sqcup \dots \sqcup A_{\xind A}$. It remains to verify the relation between elements of $D\subseteq X$ and the sets \(A_i\), $i\ge 1$. Indeed, \(D\) cannot contain an element \(u\) that is larger than some element \(v \in A_i\) with \(i \ge 1\), since in that case Lemma~\ref{lemma:move-down} would imply that \(v\) is larger than some element \(w \in A_0\). Hence \(u\) would be larger than \(w \in A_0\), contradicting the assumption that \(u \in D\). To construct the required \(G\)-map, we assign the value \(0\) to all elements of \(X\), and then apply the induction hypothesis to \(Y \cup (A_1 \sqcup \dots \sqcup A_{\xind A})\), assigning values starting from \(1\). This is possible because \(\xind(A_1 \sqcup \dots \sqcup A_{\xind A})\) is smaller than \(\xind A\), while \(\xind Y = 0\). This completes the inductive step, and the desired conclusion follows.
\end{proof}

Using the notation \(\mu(G)\), both inequalities in Theorem~\ref{Main:Theorem 1} can be unified into the following single inequality: for every group \(G\), if \(P = A \cup B\) where \(P\) is a \(G\)-poset and \(A,B\) are \(G\)-subposets of \(P\), then
\begin{equation}
\label{unified inequality}
\xind P \le \xind A + \mu(G)\bigl(\xind B + 1\bigr).
\end{equation}

Regarding sharpness, it is straightforward to verify that this bound is attained for \(G = \mathbb{Z}_2\). So we provide:

\begin{example}
Let
\(
P=Q_n\mathbb{Z}_2,
\)
so that \(\xind P=n\). For any integer \(m\) with \(0\le m\le n\), decompose \(P\) as
\[
A=\{(\pm,a):0\le a\le m\}=Q_m\mathbb{Z}_2
\quad\text{and}\quad
B=\{(\pm,b):m<b\le n\}\cong_{\mathbb{Z}_2} Q_{\,n-m-1}\mathbb{Z}_2.
\]
Then \(\xind A=m\), \(\xind B=n-m-1\), and therefore
\[
\xind P=n=\xind A+\xind B+1.
\]
This shows that the estimate in the case \(G=\mathbb{Z}_2\) is attained, and hence is best possible. 
\end{example}

Establishing sharpness for the other bound is more involved. We first construct an explicit example showing that \(\mu(G) = 2\) whenever \(G\) has at least three elements. This construction is then used to prove that the above inequality is best possible for \(G\)-posets with arbitrarily large cross-index.

\begin{theorem}
\label{Thm: mu(G)=2 & large gap}
If $G$ is a group with at least three elements, then $\mu(G)=2$. Furthermore, for any pair of non‑negative integers $m,n$ with $m \ge n$, there exists a $G$‑poset $Q$ that can be written as the union of two $G$‑subposets $Q_1$ and $Q_2$ such that  
\[
\xind Q_1 = m, \quad \xind Q_2 = n, \quad \text{and} \quad \xind Q = m + 2(n + 1).
\]
\end{theorem}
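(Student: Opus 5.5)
The upper bound $\mu(G)\le 2$ is Theorem~\ref{thm:bound <=2}, so the first assertion reduces to exhibiting, for each $G$ with $|G|\ge 3$, a free $G$-poset $P=A\cup B$ with $\xind A=\xind B=0$ and $\xind P=2$. The second assertion will then follow from this base example by taking joins and using Proposition~\ref{prop: join}, so I will do the general statement first and treat the base example as the main step.

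\textbf{Reduction to the base example.} Suppose $P=A\cup B$ is as above. For $k\ge 1$ let $P^{*k}$ be the $k$-fold join. Its underlying set is the disjoint union of the copies $A^{(i)}\cup B^{(i)}$, so
\[
P^{*k}=A^{*k}\cup B^{*k}
\]
as induced $G$-subposets. By Proposition~\ref{prop: join}, $\xind A^{*k}=\xind B^{*k}=k-1$ and $\xind P^{*k}=2k+(k-1)=3k-1$. Given $m\ge n$, put $k=n+1$. If $m=n$, take $Q=P^{*(n+1)}$, $Q_1=A^{*(n+1)}$ and $Q_2=B^{*(n+1)}$. If $m>n$, take
\[
Q=Q_{m-n-1}G * P^{*(n+1)},\qquad Q_1=Q_{m-n-1}G* A^{*(n+1)},\qquad Q_2=B^{*(n+1)}.
\]
Proposition~\ref{prop:xind} and Proposition~\ref{prop: join} then give $\xind Q_1=(m-n-1)+n+1=m$ and $\xind Q_2=n$. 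They also give
\[
\xind Q=(m-n-1)+(3n+2)+1=m+2(n+1).
\]
This settles the second assertion, and it also confirms that Theorem~\ref{Main:Theorem 1}(a) is sharp.

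\textbf{The base example.} I would build $P$ in three levels, $D\prec A\prec U$ with $B=D\sqcup U$, using two distinct nontrivial elements $s,t\in G$. This is exactly where $|G|\ge 3$ enters: for $\mathbb{Z}_2$ no such pair exists, consistent with Theorem~\ref{thm: mu(Z_2)=1}. The level $A$ is a union of free orbits. Each lower element $d_g$ sits below an $A$-element of sign $g$ and below one of sign $gs$, and each upper element $u_g$ sits above $A$-elements of signs $g$ and $gt$. The relations are arranged so that the comparability graph of $A$ has no path between two elements of the same orbit.

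The main obstacle is that the order must be transitively closed, and closing it creates comparabilities $d\prec u$ inside $B$. These can produce a path in $B$ joining two elements of the same orbit, which would destroy $\xind B=0$. If $A$ is a single orbit this always happens, because a $d$ lies below $u$'s of several different signs. So I would enlarge $A$ to several orbits, for example by inserting a separating layer as in Example~\ref{ex:21}. The aim is that no element of $D$ lies below two $U$-elements of the same orbit, and I would adjust the layers by trial, checking that $\xind A=\xind B=0$ after each change.

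\textbf{Lower bound $\xind P\ge 2$.} I would follow the proof of Lemma~\ref{lem: Xind P_G=2}. Suppose $\psi\colon P\to Q_1G$ is a $G$-map. Some zig-zag in $D\cup A$ joins two elements of one orbit, so $\psi$ cannot send all of it to value $0$, and some element of $A$ receives value $1$. That element lies below $U$-elements of two different signs. Since distinct elements of equal value in $Q_1G$ are incomparable, those $U$-elements would need value $2$, a contradiction. Making both zig-zags genuinely forced simultaneously, while keeping $\xind B=0$, is the step I expect to be hardest.
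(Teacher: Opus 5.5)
Your reduction to the base example is correct and matches the paper. Joining $n+1$ copies of a height-two example $P=P_1\cup P_2$ with $Q_{m-n-1}G$ and applying Proposition~\ref{prop: join} gives exactly the required indices. The paper places $Q_{m-n-1}G$ on top rather than at the bottom, which makes no difference.

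The gap is the base example, which is the entire content of $\mu(G)\ge 2$. You do not construct it; ``adjust the layers by trial'' is not a construction. Moreover, your lower-bound mechanism cannot work as stated.

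\textbf{Why the one-element argument fails.} Let $z\in A$ lie above some $d\in B$. Then the up-set of $z$ contains no two distinct elements of one orbit. Two such elements of $A$ would be joined through $z$ in the comparability graph of $A$. Two such elements of $B$ would both lie above $d$, so they would be joined through $d$ in the comparability graph of $B$. Either case contradicts $\xind A=\xind B=0$.

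Now take a zig-zag in $D\cup A$ joining $x$ to $hx$. It cannot stay inside $D$ or inside $A$, so it contains an edge $d\prec z$ with $d\in D$ and $z\in A$. Such a $z$ is non-minimal. Nothing in your argument stops $\psi\colon P\to Q_1G$ from giving value $1$ exactly to this $z$. No contradiction can then be read off from the up-set of $z$: your ``$U$-elements of two different signs'' above $z$ must lie in different orbits, so equivariance does not force their $\psi$-signs apart. Any argument based on a single element receiving value $1$ is therefore doomed; the obstruction has to be global.

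\textbf{The missing idea.} The paper uses seven orbits: minimal $A,B$, middle $C_1,C_2,C_3$, and maximal $D,E$.
\begin{itemize}
\item The relations include $e_A\prec (g_i)_{C_i}$ for three pairwise distinct $g_1,g_2,g_3$; this is where $|G|\ge 3$ is used.
\item $B$ is attached below each $C_i$ with the identity label, and $D,E$ are attached symmetrically at the top.
\item These choices make both $A\sqcup E$ and $B\sqcup C_1\sqcup C_2\sqcup C_3\sqcup D$ have cross-index $0$.
\end{itemize}

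Given $\psi\colon P\to Q_1G$, normalize minimal elements to value $0$ and maximal elements to value $1$. Each $C_i$ is one orbit and so takes a single value. By pigeonhole, two middle orbits, say $C_2$ and $C_3$, share a value $v$.
\begin{itemize}
\item If $v=0$, the path $e_A\prec (g_3)_{C_3}\succ (g_3)_B\prec (g_3)_{C_2}\succ (g_2^{-1}g_3)_A$ lies entirely at value $0$. It joins $e_A$ to a nontrivial translate of itself, which is a contradiction.
\item If $v=1$, the quadrilateral $C_3EC_2D$ gives the same contradiction at the top.
\end{itemize}
The contradiction comes from two middle orbits sharing a value, not from any single element. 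This is precisely what your argument cannot capture.

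Your three-level shape $D\prec A\prec U$ is not itself the problem. After the merging step of Theorem~\ref{thm:bound <=2}, the paper's example has exactly that shape, with $C_1,C_2,C_3$ absorbed into the $A$-part. What must change is the lower-bound argument.
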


\begin{proof}
\label{Proof Main:Theorem 1}
To verify this claim, by Theorem~\ref{thm:bound <=2}, it suffices to construct a $G$-poset 
$P$ with $\xind P\ge 2$ such that it can be decomposed into the union of two 
$G$-subposets, each with cross-index $0$. Let $A, B, C_1, C_2, C_3, D$, and $E$ be seven disjoint copies of $G$, and define
\[
P = A \sqcup B \sqcup C_1 \sqcup C_2 \sqcup C_3 \sqcup D \sqcup E.
\]
For notational clarity, we write $g_X$ to denote the copy of an element $g \in G$ within the component $X$, where $X$ is any of the sets $A$, $B$, $C_1$, $C_2$, $C_3$, $D$, or $E$ defined above. Note that the group $G$ acts naturally on $P$ via its binary operation on each component, endowing $P$ with the structure of a $G$-set. To equip $P$ with a $G$-poset structure, we fix three distinct elements $g_1, g_2, g_3 \in G$ and define the partial order according to the following diagram: 

\begin{figure}[h!]
        \centering
        \includegraphics[width=0.5\textwidth]{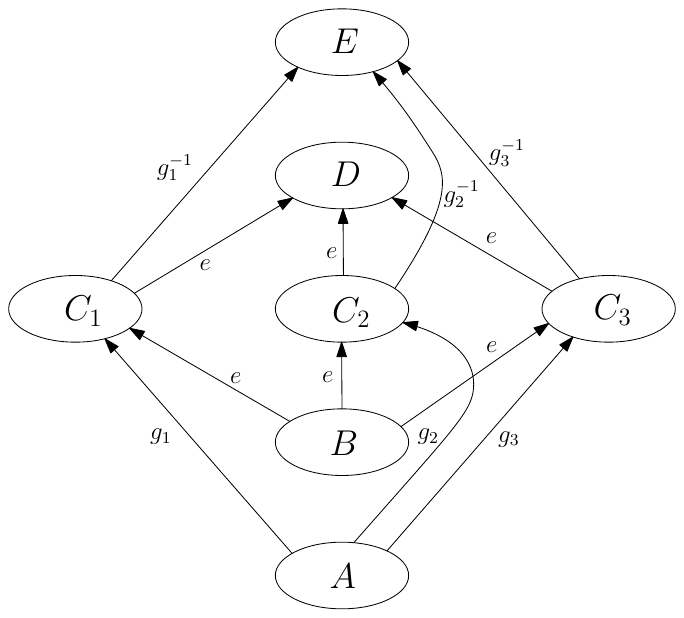}
        \caption{An example showing $\mu(G)=2$ for $|G|\ge 3$}
        \label{fig:union-counterex}
\end{figure}
More precisely, in the above diagram, an arrow labeled $g$ from $X$ to $Y$ indicates the relation $e_X \prec g_Y$. We then minimally extend these relations to ensure $P$ becomes a $G$-poset.

We now prove that $\xind P\ge 2$. Suppose, for contradiction, that there exists a $G$-map $\psi \colon P \to Q_1G$.  Let $P = P_1 \cup P_2$ where:
\begin{align*}
P_1 &= A \sqcup E, \\
P_2 &= B \sqcup C_1 \sqcup C_2 \sqcup C_3 \sqcup D.
\end{align*}
Note that $\xind P_1 = \xind P_2 = 0$ by construction. Without loss of generality, we may assume all elements of $A$ and $B$ take the value $0$ and all elements of $D$ and $E$ take the value $1$ under $\psi$, as $A$ and $B$ consist of the minimal elements of $P$, and $D$ and $E$ consist of the maximal elements. By the pigeonhole principle, at least two of $C_1$, $C_2$, and $C_3$ must share the same value. Due to the symmetry in the middle-layer posets, we may assume without loss of generality that $\psi$ assigns the same value $v \in \{0,1\}$ to all elements of both $C_2$ and $C_3$. We now consider two scenarios:
\begin{itemize}
    \item \textbf{Case 1:} $v= 0$. Consider the quadrilateral $AC_3BC_2$ where all edges are links and orbits share the same value.  The comparability graph of $P$ contains a path $e_{A}\to (g_3)_{C_3}\to (g_3)_{B}\to (g_3)_{C_2}\to (g_2^{-1}g_3)_{A}$.
    This forces $\psi(e_{A}) = \psi((g_2^{-1}g_3)_{A})$ while $g_2 \neq g_3$ and $g_2^{-1}g_3\neq e$, yielding a contradiction. 
    \item \textbf{Case 2:} $v= 1$. The quadrilateral $C_3EC_2D$ leads to an analogous contradiction through the same reasoning.
\end{itemize}
Thus, $\xind P\ge 2$ as claimed. Consequently $\mu(G)=2$. For the second assertion, let \(P=P_1\cup P_2\) be the above example, and take \(n+1\) disjoint copies \(\mathcal P^{(0)},\mathcal P^{(1)},\dots,\mathcal P^{(n)}\) of \(P\), writing \(\mathcal P^{(i)}=\mathcal P_1^{(i)}\cup \mathcal P_2^{(i)}\) accordingly. Define
\(
Q:=\mathcal P^{(0)}*\mathcal P^{(1)}*\cdots *\mathcal P^{(n)}*Q_{m-n-1}G,
\)
with the convention \(Q_{-1}G=\varnothing\) when \(m=n\), and let
\[
Q_1:=\mathcal P_1^{(0)}*\cdots *\mathcal P_1^{(n)}*Q_{m-n-1}G,
\qquad\&\qquad
Q_2:=\mathcal P_2^{(0)}*\cdots *\mathcal P_2^{(n)}.
\]
By additivity of the cross-index under joins, \(\xind Q_1=m\), \(\xind Q_2=n\), and
\[
\xind Q
= \xind Q_1+\mu(G)(\xind Q_2+1)
= m+2(n+1).
\]
This proves the sharpness of the general bound for $|G|\ge 3$.
\end{proof}

\begin{remark}\label{remark: final}
In the construction above, the poset \(P\) has height \(2\), and therefore \(\xind P\le 2\) by Proposition~\ref{prop:dimension_bound}; hence \(\xind P=2\). On the other hand,
\(
\ind \Delta P =1
\). Indeed, \(\Delta P\) is a \(G\)-subcomplex of \(\Delta P_1 * \Delta P_2\), so by monotonicity and subadditivity of the topological index,
\[
\ind \Delta P
\le \ind(\Delta P_1 * \Delta P_2)
\le \ind \Delta P_1 + \ind \Delta P_2 +1
=1,
\]
since \(\xind P_1=\xind P_2=0\) implies \(\ind \Delta P_1=\ind \Delta P_2=0\).
On the other hand, \(\ind \Delta P \ge 1\), since \(\xind P \neq 0\). Hence \(\ind \Delta P = 1\). Thus, for every group \(G\) with \(|G|>2\), we obtain another concrete example with \(\ind \Delta P=1\) and \(\xind P=2\).
\end{remark}

\begin{remark}\label{remark: gap between simplicial index & cross index}
By Corollary~\ref{cor: main result}(a), the equality \(\sind \Delta P = \xind P\) holds for every free \(\mathbb{Z}_2\)-poset \(P\). In contrast, Corollary~\ref{cor: main result}(b) provides only the weaker bound \(\sind \Delta P \le \xind P\). We conclude this part by showing that, in this setting, the gap \( \xind P - \sind \Delta P\) may indeed occur and can be arbitrarily large. Let \(P = P_1 \cup P_2\) be the poset constructed in the proof of Theorem~\ref{Thm: mu(G)=2 & large gap}. As observed in Remark~\ref{remark: final}, we have \(\xind P = 2\).
We claim that \(\sind \Delta P = 1\). Indeed, define a \(G\)-simplicial map \(\Delta P \to E_1G\) by sending each vertex \(g \in P_1\) to \((g,1)\) and each vertex \(g \in P_2\) to \((g,0)\). It is straightforward to verify that this is \(G\)-equivariant simplicial map from \(\Delta P\) to \(E_1G\). Therefore,
\(
\sind \Delta P \le 1.
\)
On the other hand, \(\sind \Delta P \ge 1\), since \(\xind P \neq 0\). Hence \(\sind \Delta P = 1\). Now let \( Q= P * \cdots * P\)
be the \(n\)-fold join of \(P\) with itself. By additivity of the cross-index under joins,
\(
\xind Q = 3n - 1.
\)
On the other hand, by subadditivity of the simplicial index,
\(
\sind \Delta Q
\le 2n - 1.
\)

Thus, this family yields examples of \(G\)-posets \(P\) with arbitrarily large cross-index \(\xind P\) for which \(\sind \Delta P\) can be as small as approximately \(\frac{2}{3}\) of \(\xind P\). Moreover, in the case \(n=1\), our construction shows that the lower bound from Corollary~\ref{cor: main result}(b), namely \(\xind P /2\), can already be attained. It would therefore be interesting to determine whether this ratio can be reduced further to \(1/2\) for \(G\)-posets with arbitrarily large cross-index, thereby matching the known lower bound more generally. We state this open question explicitly below.
\end{remark}

\section{Open Problems}

We conclude by compiling open problems and conjectures that have emerged from our investigation, which we believe merit further study. A question arises from Remark in Remark~\ref{remark: gap between simplicial index & cross index} above:

\begin{question}
What is the right asymptotics for a lower bound of the ratio \(\frac{\sind \Delta P}{\xind P}\) for $G$-posets $P$ when \(\xind P\to \infty\)?
\end{question}

We repeat here Question~\ref{que: que3} for reader's convenience:

\begin{question*}[Question~\ref{que: que3}]
Let \( n > 1 \) be a positive integer. Does there exist a connected finite free \( G \)-poset \( P \) such that
\[
\ind \Delta P = 1 \quad \text{but} \quad \xind P = n?
\]
\end{question*}

At present, this question has been solved only in the case \(n=2\); see Example~\ref{ex:21} and, for \(|G|\ge 3\), also Remark~\ref{remark: final}. The remaining cases are open.

Given a $G$-simplicial complex $\mathcal{K}$, it was observed in the introduction that the sequence
\begin{equation}
\label{equation:face-poset-barycentric}
 \sind\mathcal{K},\, \xind \mathcal{F}(\mathcal{K}),\, \sind \sd(\mathcal{K}),\, \xind \mathcal{F}(\sd(\mathcal{K})),\, \dotsc   
\end{equation}
is nonincreasing and eventually becomes constant, equal to $\ind \mathcal{K}$. This naturally leads to the following questions:

\begin{question}
\label{que:index_gap_stabilization}
For a given $G$-complex $\mathcal{K}$, how many steps are required for the sequence \eqref{equation:face-poset-barycentric} to become constant?
\end{question}
\begin{question}
\label{que:index_gap_values}
What values can the gaps between successive terms of the sequence \eqref{equation:face-poset-barycentric} take?
\end{question}
At present, only the behavior of the first two terms is understood in general; see Proposition~\ref{proposition:chromatic}. Corollary~\ref{cor: main result} further controls the pairs
\[
(\xind \mathcal F(\sd^r\mathcal K),\, \sind(\sd^{r+1}\mathcal K)),
\]
yielding a complete answer for \(G=\mathbb Z_2\) and a partial answer for groups of order at least three. 

Finally, inequality~\eqref{eq:index_inequality 2} suggests the following question concerning the gap between the simplicial index and the topological index of a poset.

\begin{question}[Gap between the simplicial index and the topological index]
\label{que:index_gap index & simplicial index}
Given integers \(m>n\ge 1\), does there exist a finite free \(G\)-poset \(P\) such that
\[
\sind \Delta P=m
\qquad\text{and}\qquad
\ind \Delta P=n?
\]
\end{question}

For \(G=\mathbb Z_2\), Corollary~\ref{cor: main result} shows that this question is equivalent to Question~\ref{que: que2}, since in that case \(\sind \Delta P=\xind P\).


\bibliographystyle{plain}
\bibliography{biblio}

\end{document}